\newcommand{\irr}{\operatorname{irr}}
\newcommand{\case}[1]{\paragraph*{Case #1:}}
\newtheorem{hypothesize}{Hypothesize}
\newtheorem{theorem}{Theorem}[section]
\newtheorem{lemma}[theorem]{Lemma}
\newtheorem{proposition}{Proposition}[section]
\newtheorem{definition}{Definition}
\author{Jasem Hamoud}
\address{\textbf{Jasem Hamoud} 
Department of Discrete Mathematics, Moscow Institute of Physics and Technology
}
\email{khamud@phystech.edu}
\thanks{}
\author{Alexei Belov-Kanel}
\address{\textbf{Alexei Belov-Kanel} 
Department of Mathematics, Bar-Ilan University, Ramat Gan, 5290002, Israel\\
Department of Discrete Mathematics, Moscow Institute of Physics and Technology, Dolgoprudnyi, Institutskiy Pereulok,
141700 Moscow, Russia
}
\email{kanelster@gmail.com}
\thanks{}
\author{Duaa Abdullah}
\address{\textbf{Duaa Abdullah:} Department of Discrete Mathematics, Moscow Institute of Physics and Technology }
\email{abdulla.d@phystech.edu}
\thanks{}
\title{Bounds on Trees with Topological Indices Among Degree Sequence}
\date{}
\begin{document}

\begin{abstract}
In this paper, we investigate The relationship between the Albertson index and the first Zagreb index for trees.  For a tree $T=(V,E)$ with $n=|V|$ vertices and $m=|E|$ edges, we provide several bounds and exact formulas for these two topological indices, and we show that the Albertson index $\irr(T)$ and the first Zagreb index $M_1(T)$ satisfy the association \[ \irr(T)=d_1^2+d_n^2+(n-2)\left(\frac{\Delta + \delta}{2}\right)^2+\sum_{i=2}^{n-1} d_i+d_n - d_1-2n-2.\]
Our goal of this paper is provide a topological indices, Albertson index, Sigma index among a degree sequence $\mathscr{D}=(d_1,\dots,d_n)$  where it is non-increasing and non-decreasing of tree $T$.
\end{abstract}

\maketitle

\noindent\rule{12.7cm}{1.0pt}

\noindent
\textbf{Keywords:} Trees, Bound, Maximum, Minimum, Degree, Sequence, Index.

\medskip

\noindent

\medskip

\noindent
{\bf MSC 2010:} 05C05, 05C12, 05C35, 68R10.

\noindent\rule{12.7cm}{1.0pt}

\section{Introduction}\label{sec1}
Throughout this paper. Let $G=(V,E)$ be a simple graph of order $n$, with vertices set $V=\{v_1,v_2,\dots,v_n\}$ and edges set $E=\{e_1,e_2,\dots,e_m\}$.  Topological indices are one of the many graph invariants investigated in this discipline, and they play an important role in measuring graph structure.  These indices act as numerical descriptors, capturing key properties of molecular structures, networks, and other graph-based systems.

In 1995, Molloy and Reed in~\cite{Molloy} defined a degree sequence as $\mathscr{D}=(d_1,d_2,\dots,d_n)$. Zhang et al. in~\cite{Zhang} define the classes of trees $\mathcal{T}$ among a non-increasing degree sequence $\mathscr{D}$, denote by $\mathcal{T}_{\mathscr{D}}$ when $d_1\geqslant d_2\geqslant\dots \geqslant d_n$, in~\cite{Blanc} define $\mathcal{T}_{\mathscr{D}}$ as $\sum_{i=1}^{n}d_i=n-1$. The maximum degree $\Delta$ of any tree with a degree sequence $\mathscr{D}$ defined by Broutin et al. in~\cite{Broutin} as $\Delta_{\mathscr{D}}=\max \{i, \mathcal{T}_{\mathscr{D}}>0\}$. Marshall, A.W., et al in~\cite{Marshall79Olkin} introduce two non-increasing vectors in $\mathbb{R}^n$ depending on a degree sequence.\par 
Furthermore, let $\deg v=\deg_G(v)$ be the degree of a vertex $v\in V(G)$, the amount of divergence at $u$ equals $\deg u$, which for any tree has a center chord of a maximum of one or two points. For a graph $G$ with $n$ vertices and $p$ pendents. For each pendent $G_i$ (with $n_i$ vertices), the maximum edges within it are $\frac{1}{2}n_i(n_i - 1)$ (see also \cite{Harary}). For any edge connecting the vertices $u, v \in V$  and  $\{u,v\} \in E$.  Kunegis,J. in~\cite{KunegisJ} show that for $u,v \in V$ are connected, then denoted by $u \sim v$. The degree of $u$ is a number of neighbour of a node $u$ and is called by $\deg (u)=\{v \in V \mid u \sim v\}$. In~\cite{Gallaipp,Brandt}
for $\mathscr{D}=(d_1,\dots,d_n)$ be a degree sequence  where $d_1\geqslant d_2\geqslant \dots \geqslant d_n$, then the inequality satisfying: 
\begin{equation}~\label{eq1}
    \sum_{i=1}^{n}d_i\leq n(n-1)+\sum_{i=n+1}^{k} \min(n,d_i).
\end{equation}

 In 1997, Albertson in~\cite{ALBERTSON} mentions the imbalance of an edge $uv$ by $imb(uv)$ where $imb(uv)=\lvert d_u-d_v\rvert$.We considered that a graph $G$ is regular if all of its vertices have the exact same degree as we show in~(\ref{eq1}), then the irregularity measure defined in~\cite{ALBERTSON, GUTMAN2, Brandt} as:
\[
\operatorname{irr}(G)=\sum_{uv\in E(G)}\lvert d_u(G)-d_v(G) \rvert.
\]
\begin{lemma}\cite{Nasiri}~\label{lem1}
	The star $S_n$  is the only tree of order n that has a great deal of irregularity, satisfying:
	\[\operatorname{irr}\left( {{S_n}} \right) = \left( {n - 2} \right)\left( {n - 1} \right).\]
\end{lemma}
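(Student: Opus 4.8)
The plan is to separate the statement into three tasks: verify the exact value $\irr(S_n)=(n-1)(n-2)$, prove a matching upper bound $\irr(T)\le (n-1)(n-2)$ valid for every tree $T$ of order $n$, and then read off uniqueness from the equality analysis.

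First I would compute $\irr(S_n)$ directly from the definition. In the star $S_n$ the center has degree $n-1$, while each of the remaining $n-1$ vertices is a pendent of degree $1$, and every one of the $n-1$ edges joins the center to a leaf. Hence each edge contributes $\lvert (n-1)-1\rvert = n-2$ to the sum, and summing over all $n-1$ edges gives $\irr(S_n)=(n-1)(n-2)$.

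Next I would establish the upper bound. A tree of order $n$ has exactly $n-1$ edges, minimum degree $\delta\ge 1$ by connectivity, and maximum degree $\Delta\le n-1$. For any edge $uv$ both endpoint degrees lie in $[\delta,\Delta]$, so $\lvert d_u-d_v\rvert\le \Delta-\delta\le \Delta-1\le n-2$. Summing this per-edge estimate over the $n-1$ edges yields
\[
\irr(T)=\sum_{uv\in E(T)}\lvert d_u-d_v\rvert\le (n-1)(\Delta-1)\le (n-1)(n-2),
\]
so the star is a maximizer.

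Finally, for uniqueness I would examine the equality case. Equality throughout the chain above forces $\Delta-1=n-2$, i.e.\ $\Delta=n-1$; thus $T$ must contain a vertex adjacent to all other $n-1$ vertices. In a tree those $n-1$ incidences already exhaust all $n-1$ edges, so every other vertex is a leaf and $T\cong S_n$. The main obstacle is making the per-edge bound genuinely tight simultaneously: rather than arguing edge-by-edge, where the individual maxima might a priori be incompatible, the cleaner route is to note that the single inequality $\Delta\le n-1$ already pins down $\Delta=n-1$ at equality, and a tree of order $n$ with maximum degree $n-1$ is necessarily the star. The degenerate small cases ($n\le 2$) should be checked separately, where both sides vanish.
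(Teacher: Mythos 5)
Your proposal is correct, but there is nothing in the paper to compare it against: the paper states this lemma as an imported result, citing \cite{Nasiri}, and provides no proof of its own. Your argument is a sound, self-contained substitute. The direct computation $\irr(S_n)=(n-1)(n-2)$ is right, and the chain $\irr(T)\le (n-1)(\Delta-1)\le (n-1)(n-2)$ (using $\delta\ge 1$ in a tree, $\Delta\le n-1$, and exactly $n-1$ edges) gives the upper bound. Your uniqueness step is also handled correctly, and you were right to be cautious about edge-by-edge equality analysis: equality of the outer quantities already forces $(n-1)(\Delta-1)\ge (n-1)(n-2)$, hence $\Delta=n-1$, and in a tree a vertex of degree $n-1$ is incident to all $n-1$ edges, so every other vertex is a leaf and $T\cong S_n$; no simultaneous tightness of the individual per-edge bounds needs to be arranged. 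An equivalent, slightly crisper packaging of your own idea: any non-star tree of order $n\ge 4$ has $\Delta\le n-2$, so $\irr(T)\le (n-1)(n-3)<(n-1)(n-2)$, which delivers the bound and the uniqueness in one stroke. With the degenerate orders $n\le 3$ checked directly (there is only one tree of each such order, and it is the star), the lemma is fully proved.
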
 
 Dorjsembe et al. In~\cite{Dorjsembe} given the relationship between irregularity of Albertson index and minimum, maximum degrees $\delta,\Delta$ of graph $G$ (respectfully), where contribute vital roles in determining connection, shading, component incorporation, and realisability.  In~\cite{Hosam,Brandt} defined total irregularity as $\operatorname{irr}_T(G)=\sum_{\{u,v\} \subseteq V(G)}\lvert d_u(G)-d_v(G) \rvert$ and shown upper bounds on the irregularity of graphs. When $G$ is a tree, Ghalavand, A. et al. In~\cite{Ghalavand} proved  $\operatorname{irr}_T(G) \leq \frac{n^2}{4} \operatorname{irr}(G)$ when the bound is sharp for infinitely many graphs.  Andrade, E., et al. in~\cite{AndradeRobbianoLenes} introduce a topological index of graph $G$ with $m \geq 1$ edges, it known ``The Zagreb index'' of $G$, denote by $Z_{g}(G)$, then $Z_{g}(G)=\sum_{i \in V(G)} d^{2}(i)$, for $i=1,2, \ldots, k$, where $k=\min \{n, m\}$.The first and the second Zagreb index, $M_1(G)$ and $M_2(G)$ are defined in~\cite{GutmanTrinajstic, WILCOX,ALBERTSON,Liu} as: 
\[
M_1(G)=\sum_{i=1}^{n}d_i^2, \quad \text{and} \quad M_2(G)=\sum_{uv\in E(G)} d_u(G)d_v(G).
\] 
Ghalavand, A., et al. in~\cite{Ghalavand2023Tavakoli} introduced the relationship of the Albertson index with the Zagreb index. The Zagreb index has been extensively explored in chemical graph theory, with applications in quantitative structure-property relationship (QSPR) and quantitative structure-activity relationship (QSAR) research. Therefore, in~\cite{Kexiangsmallest, Dorjsembe,Sigarreta2021Rada,Gutman2024Gutman,Dimitrov2023Stevanovi, Furtula} provides many bounds for  any tree of order $n$ and maximum degree $\Delta$ depend on definition of the first Zagreb index as:
\begin{equation}~\label{eq2}
M_1(G)\leq \max \left\{(n-1)\left(\Delta+\frac{n-1}{\Delta}\right), \frac{(n-1)(n+3)}{2}\right\}.
\end{equation}
Another important topological descriptor is the first Zagreb index, denoted by $M_1(G)$, which was introduced by Gutman and Trinajstić \cite{GutmanTrinajstic}. For a graph $G$, the first Zagreb index is defined as:

\begin{equation}~\label{eq3}
M_1(G) = \sum_{v \in V} d_v^2 = \sum_{uv \in E} (d_u + d_v)
\end{equation}
 The recently introduced $\sigma(G)$ irregularity index is a simple diversification of the previously established Albertson irregularity index, in~\cite{GutmanTogan,DimitrovAbdo} defined as: 
\[
\sigma(G)=\sum_{uv\in E(G)}\left( d_u(G)-d_v(G) \right)^2.
\]
Trees, or connected acyclic graphs, are a fundamental type of graph structure with applications ranging from computer science methods to chemical structures.  The study of topological indices on trees has received a lot of attention because of its simple but versatile structure.  For a tree $T$ with $n$ vertices, certain features are readily apparent:  $T$ has precisely $n-1$ edges, is linked, and does not include any cycles.\par 
In this paper, we investigate the relationship between the Albertson index and the first Zagreb index for trees. Specifically, we establish several bounds and inequalities that relate these two indices, taking into account parameters such as the maximum degree $\Delta$ and minimum degree $\delta$ of the tree. Our results extend previous work by providing tighter bounds and more general relationships.\par 
The goal of this paper is provide a topological indices, Albertson index, Sigma index among a degree sequence $\mathscr{D}=(d_1,\dots,d_n)$  where it is non-increasing and non-decreasing of tree $T$. This paper is organized as follows. In Section~\ref{sec1}, we observe the important concepts to our work including literature view of most related papers, in Section~\ref{sec2} we have provided a preface through some of the important theories we have utilised in understanding the work based on literature view of Section~\ref{sec1}, in Section~\ref{sec3} we provide the main result of our paper, where divided into subsections, in subsection~\ref{subsec1} we introduced the first Zagreb index with bounds on trees according to Albertson index. In Section~\ref{sec4}  we provide bounds on trees with Sigma Index according to maximum degree $\Delta$ and minmum degree $\delta$ of tree $T$. 

\section{Preliminaries}\label{sec2}

In this section, we presented several fundamental concepts that will be deployed for the major results for $\mathscr{D}=(d_1,\dots,d_n)$ a degree sequence in Theorem~\ref{hy.1}. Also, we provide concept of Sigma index in Theorem~\ref{sigmathm}.
\begin{definition}[Star Tree]~\label{StardefTree}
Let $T$ be a tree of order $n$,if $T$ has a central vertices $v_0$ called the root linked with pendent vertices $(v_i)_{i\geq 1}$ where $\deg_T(v_i)=1$, denote $T$ by $S_n$ a star tree of order $n$. 
\end{definition}
\begin{definition}[Starlike Tree]~\label{StarlikedefTree}
Let $T$ be a tree of order $n$,if $T$ has a central vertices $n_0$ called the root where $\deg_T(n_0)\geqslant 3$, linked to $k\geqslant 3$ paths of lengths $n_1,n_2,\dots,n_k$ where $n_i\geq 1$, denote $T$ by $S_k$ a Starlike tree of order $n$ by $S(n_1,n_2,\dots,n_k)$. 
\end{definition}
We can demonstrate that in Figure~\ref{fig:StarlikeTree} for Star Tree and Starlike Tree as we show that.
\begin{figure}[H]
    \centering
   \begin{tikzpicture}[scale=0.8]
  \begin{scope}[xshift=-5cm]
    \node[circle, draw, fill=black, inner sep=2pt, label=below:{$v_0$}] (v0) at (0,0) {};
    \node[circle, draw, fill=black, inner sep=2pt, label=above:{$v_1$}] (v1) at (0,2) {};
    \node[circle, draw, fill=black, inner sep=2pt, label=above right:{$v_2$}] (v2) at (1.7,1) {};
    \node[circle, draw, fill=black, inner sep=2pt, label=below right:{$v_3$}] (v3) at (1.7,-1) {};
    \node[circle, draw, fill=black, inner sep=2pt, label=below left:{$v_4$}] (v4) at (-1.7,-1) {};
    \node[circle, draw, fill=black, inner sep=2pt, label=above left:{$v_5$}] (v5) at (-1.7,1) {};
    \draw (v0) -- (v1);
    \draw (v0) -- (v2);
    \draw (v0) -- (v3);
    \draw (v0) -- (v4);
    \draw (v0) -- (v5);
    \node at (0,-2.5) {$S_5$ (Star Graph)};
  \end{scope}
  \begin{scope}[xshift=5cm]
    \node[circle, draw, fill=black, inner sep=2pt, label=below:{$u_0$}] (u0) at (0,0) {};
    \node[circle, draw, fill=black, inner sep=2pt] (u1) at (0,1) {};
    \node[circle, draw, fill=black, inner sep=2pt] (u2) at (0,2) {};
    \node[circle, draw, fill=black, inner sep=2pt] (u3) at (0,3) {};
    \node[circle, draw, fill=black, inner sep=2pt] (u4) at (1.5,0.5) {};
    \node[circle, draw, fill=black, inner sep=2pt] (u5) at (2.5,1) {};
    \node[circle, draw, fill=black, inner sep=2pt] (u6) at (1.5,-0.5) {};
    \node[circle, draw, fill=black, inner sep=2pt] (u7) at (2.5,-1) {};
    \node[circle, draw, fill=black, inner sep=2pt] (u8) at (0,-1) {};
    \node[circle, draw, fill=black, inner sep=2pt] (u9) at (0,-2) {};
    \node[circle, draw, fill=black, inner sep=2pt] (u10) at (-1.5,-0.5) {};
    \node[circle, draw, fill=black, inner sep=2pt] (u11) at (-2.5,-1) {};
    \node[circle, draw, fill=black, inner sep=2pt] (u12) at (-1.5,0.5) {};
    \node[circle, draw, fill=black, inner sep=2pt] (u13) at (-2.5,1) {};
    \draw (u0) -- (u1) -- (u2) -- (u3);
    \draw (u0) -- (u4) -- (u5);
    \draw (u0) -- (u6) -- (u7);
    \draw (u0) -- (u8) -- (u9);
    \draw (u0) -- (u10) -- (u11);
    \draw (u0) -- (u12) -- (u13);
    \node at (0,-3) {$S(3,2,2,2,2,2)$ (Starlike Tree)};
  \end{scope}
\end{tikzpicture}
    \caption{$S_5$ (Star Graph) and $S(3,2,2,2,2,2)$ (Starlike Tree)}
    \label{fig:StarlikeTree}
\end{figure}
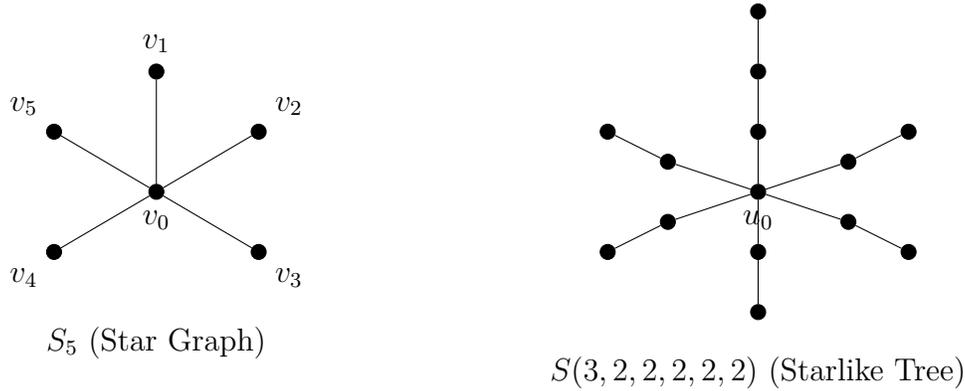
\begin{hypothesize}{\label{hy.2}}
Let $\mathscr{D}=(d_1,\dots,d_n)$ be a degree sequence with $n\geq 3$ and  let be order as: ${d_n} > d_{n-1}> \dots> d_2>  d_1$, the caterpillar tree with such order has the  minimum  value  of $\irr$ among all caterpillar trees with such degrees sequence of path vertices.
\end{hypothesize}
\begin{theorem}{\label{hy.1}}
Let $\mathscr{D}=(d_1,\dots,d_n)$ be a degree sequence with $n\geq 3$ and  let be order as: $d_n > d_1>\dots > d_2 > d_{n-1}$, the caterpillar tree with such order has the  maximum value  of $\irr$ among all caterpillar trees with such degrees sequence of path vertices.
 \end{theorem}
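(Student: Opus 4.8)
The plan is to separate $\irr(T)$ into the contribution of the spine edges and that of the pendant (leaf) edges, and then to reduce the search for an optimal ordering to a single discrete optimization over sign patterns; throughout I take the degrees to be distinct and indexed so that $d_1<d_2<\cdots<d_n$. Fix the spine to be the path through the $n$ given vertices in the chosen left-to-right order, and attach $d_i-1$ pendants to a spine vertex of degree $d_i$ if it is one of the two endpoints and $d_i-2$ pendants if it is internal. Since each pendant has degree $1$, its incident edge contributes $|1-d_i|=d_i-1$, so the pendant contribution equals $\sum_{v_i\text{ endpoint}}(d_i-1)^2+\sum_{v_i\text{ internal}}(d_i-2)(d_i-1)$. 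First I would rewrite this as $C_0+\sum_{v_i\text{ endpoint}}(d_i-1)$, where $C_0:=\sum_i(d_i-2)(d_i-1)$ is independent of the arrangement; thus the pendant part depends on the ordering only through the additive term $\sum_{v_i\text{ endpoint}}(d_i-1)$, which taken alone is maximized by placing the two largest degrees $d_n,d_{n-1}$ at the ends of the spine.

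For the spine sum $\sum_{\text{spine }uv}|d_u-d_v|$, I would write each term as $\epsilon\,(d_u-d_v)$ with $\epsilon=\operatorname{sign}(d_u-d_v)\in\{\pm1\}$ and collect the coefficient of each vertex. An internal spine vertex then acquires spine-coefficient $+2$ if it is a local maximum, $-2$ if it is a local minimum, and $0$ if it lies on a monotone stretch, while a spine endpoint acquires $+1$ or $-1$; a telescoping computation shows that these spine-coefficients sum to $0$. Folding in the pendant term, I define the \emph{effective coefficient} $c_j:=c_j^{\mathrm{sp}}+\mathbf{1}[v_j\text{ is an endpoint}]$, so that $\irr(T)=C_0-2+\sum_j c_j d_j$, each $c_j\in\{-2,0,+2\}$ (an endpoint can only be $0$ or $+2$), and $\sum_j c_j=2$. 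Maximizing $\irr(T)$ is therefore equivalent to maximizing $\sum_j c_j d_j$ over all effective-coefficient vectors realizable by a genuine path arrangement.

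The heart of the argument is this discrete maximization. Writing $p$ and $q$ for the numbers of $+2$'s and $-2$'s, the identity $\sum_j c_j=2$ forces $p=q+1$. By the rearrangement inequality, for a fixed pair $(p,q)$ the sum is largest when the $+2$'s sit on the $p$ largest degrees and the $-2$'s on the $q$ smallest; and passing from $(p,q)$ to $(p+1,q+1)$ changes the sum by $2(d_{n-p}-d_{q+1})>0$ as long as the top and bottom blocks stay disjoint. Hence one pushes $p$ to its maximum, obtaining $p=\lceil n/2\rceil$ and $q=\lceil n/2\rceil-1$, with exactly one median vertex of coefficient $0$ precisely when $n$ is even. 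I would then exhibit the stated alternating order $d_n,d_1,\dots,d_2,d_{n-1}$, check that it seats $d_n,d_{n-1}$ as peak-endpoints and alternates the remaining large and small degrees as internal peaks and valleys (with the median degree as the unique monotone vertex when $n$ is even), and verify that its effective-coefficient vector coincides with the optimal one; it therefore attains the maximum.

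The \textbf{main obstacle} is realizability: one must confirm that the optimal coefficient multiset can actually be produced by a path, honoring the constraints that peaks and valleys alternate along the spine and that neither endpoint may carry coefficient $-2$, and one must treat the parity of $n$ separately because of the lone monotone vertex in the even case. I expect the cleanest route is a direct exchange argument running in parallel with the minimizing Hypothesize~\ref{hy.2}: starting from an arbitrary arrangement, show that a small degree sitting at an endpoint, or a large degree buried as an internal valley, can be strictly improved by transposing two spine vertices, while any remaining non-alternating configuration can be removed by such a swap without decreasing $\irr(T)$. This simultaneously establishes realizability of the optimum and optimality of the stated order, and the boundary cases (the two endpoints, and for even $n$ the median vertex) are exactly where the estimates must be carried out most carefully.
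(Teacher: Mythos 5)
Your proposal cannot be checked against the paper's argument for the simple reason that the paper has none: Theorem~\ref{hy.1} is stated bare in the preliminaries, with no proof anywhere in the text (its minimization companion is even labelled only as a ``Hypothesize''), so your write-up supplies a proof where the paper offers only an assertion. On its merits, your route is correct: splitting $\irr(T)$ into pendant and spine contributions, absorbing the endpoint pendant term into effective coefficients $c_j\in\{-2,0,+2\}$ with $\sum_j c_j=2$ and endpoints restricted to $\{0,+2\}$, and maximizing $\sum_j c_j d_j$ is exactly the right reduction, and the bound $p=q+1\le\lceil n/2\rceil$ that your increment argument needs follows from the alternation of strict local maxima and interior local minima along the spine. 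One structural remark: the exchange argument you flag as the ``main obstacle'' is not actually needed. Every caterpillar arrangement satisfies your coefficient constraints, so the relaxed maximization already yields the universal upper bound
\[
\irr(T)\;\le\; C_0-2+2\sum_{\text{top }\lceil n/2\rceil} d_i-2\sum_{\text{bottom }\lceil n/2\rceil-1} d_i,
\]
and the zigzag order $d_n,d_1,d_{n-1},d_2,\dots$ with $d_n,d_{n-1}$ at the ends (and, for even $n$, the median degree seated on the single monotone stretch, where its coefficient is $0$) attains this bound by direct inspection of its coefficient vector; realizability of the abstract optimum is thereby established constructively, and no swap analysis or separate parity case-work beyond this verification is required. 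Two caveats you should record: every internal spine vertex needs $d_i\ge 2$ for the caterpillar to exist (and your $\pm2$ coefficient values use the distinctness of the degrees, which the theorem's strict inequalities do grant); and for odd $n$ the optimum is attained by any seating of the top $\lceil n/2\rceil$ degrees on the peak positions, so the theorem's claim should be read as ``the stated order is a maximizer,'' not the unique one.
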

 \begin{theorem}[Sigma index ~\label{sigmathm}]\cite{GutmanTogan}
For every simple graph $G$, then Sigma index $\sigma(G)$ is an even integer, so that we have:
\[
\sigma(G)=F(G)-2 M_2(G)=\sum_{uv\in E(G)}\left[\left(d_u\right)^2+\left(d_v\right)^2\right]-2 \sum_{uv \in \in(G)} d_u d_v .
\]
where $M_2(G)=\sum_{uv\in E(G)} d_ud_v$.
\end{theorem}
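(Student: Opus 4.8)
The statement has two independent parts, an algebraic identity and a parity claim, and the plan is to treat them separately: derive the identity directly from the definition of $\sigma(G)$, and then obtain evenness from a short congruence argument combined with the handshake lemma.

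For the identity, I would start from the definition $\sigma(G)=\sum_{uv\in E(G)}(d_u-d_v)^2$ and expand each summand as $(d_u-d_v)^2=d_u^2+d_v^2-2d_ud_v$. Summing over all edges and splitting the sum termwise gives
\[
\sigma(G)=\sum_{uv\in E(G)}\left(d_u^2+d_v^2\right)-2\sum_{uv\in E(G)}d_ud_v.
\]
Recognizing the first sum as the forgotten index $F(G)=\sum_{uv\in E(G)}\left(d_u^2+d_v^2\right)$ and the second as $M_2(G)=\sum_{uv\in E(G)}d_ud_v$ yields the claimed chain $\sigma(G)=F(G)-2M_2(G)$. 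This step is purely formal and should present no difficulty.

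For the evenness, I would use the elementary congruence $k^2\equiv k \pmod 2$ valid for every integer $k$. Modulo $2$ each summand satisfies $(d_u-d_v)^2\equiv d_u-d_v\equiv d_u+d_v$, so $\sigma(G)\equiv\sum_{uv\in E(G)}(d_u+d_v)\pmod 2$. The key bookkeeping observation is that $\sum_{uv\in E(G)}(d_u+d_v)=\sum_{v\in V}d_v^2=M_1(G)$, since each vertex $v$ contributes $d_v$ to each of its $d_v$ incident edges. Applying the same congruence once more gives $M_1(G)=\sum_{v\in V}d_v^2\equiv\sum_{v\in V}d_v=2\,|E(G)|\equiv 0\pmod 2$ by the handshake lemma, and hence $\sigma(G)$ is an even integer.

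I expect no genuine obstacle here: the identity is routine algebra, and the only real content lies in the parity claim, whose crux is simply that squaring preserves parity so that the handshake lemma can take over. The one point to get right is the incidence count $\sum_{uv\in E(G)}(d_u+d_v)=\sum_{v\in V}d_v^2$, after which evenness follows immediately from the fact that $\sum_{v\in V}d_v$ is twice the number of edges.
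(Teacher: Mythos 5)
Your proposal is correct and complete. Note, however, that the paper itself offers no proof of this theorem: it is stated in the preliminaries as an imported result, cited to Gutman--Togan et al., so there is no internal argument to compare yours against. Your two steps are both sound and fill that gap with a self-contained elementary proof: the identity follows by expanding $(d_u-d_v)^2=d_u^2+d_v^2-2d_ud_v$ and summing over edges, and the parity claim follows from $k^2\equiv k\pmod 2$ together with the incidence count $\sum_{uv\in E(G)}(d_u+d_v)=\sum_{v\in V}d_v^2$ --- which is precisely the identity the paper records as equation~(\ref{eq3}) --- and the handshake lemma $\sum_{v\in V}d_v=2|E(G)|$. One could shortcut slightly by applying the parity reduction to $F(G)$ directly (since $\sigma(G)=F(G)-2M_2(G)$, evenness of $\sigma$ is equivalent to evenness of $F(G)$, and $F(G)\equiv\sum_{uv\in E(G)}(d_u+d_v)=M_1(G)\equiv 2|E(G)|\equiv 0\pmod 2$), but this is the same argument in different packaging. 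The only other remark worth making is editorial: the statement as printed contains a typo, $\sum_{uv\in\in(G)}$, which should read $\sum_{uv\in E(G)}$; your proof implicitly corrects this.
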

\begin{theorem}~\label{pro.se.1}
Let $\mathcal{T}_1,\mathcal{T}_2$ be a class of trees of order $n$, let $\mathscr{D}_i=(x_1,x_2,\dots, x_{n-1})$ and $\mathscr{D}_j=(y_1,y_2,\dots, y_{n-1})$ be tow non increasing degree sequence, where $\sum_{i=0}^{n-1}d_i\leqslant \sum_{j=0}^{n-1}d_j$, then we have: 
\[
\irr(\mathcal{T}_{\mathscr{D}_i})\leqslant \irr(\mathcal{T}_{\mathscr{D}_j}).
\]
\end{theorem}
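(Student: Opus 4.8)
The plan is to establish the inequality by a monotone transfer argument that connects the two degree sequences through a chain of elementary moves, each of which does not decrease the Albertson index. First I would fix, for each sequence, a concrete realizing tree: the natural choice is the extremal caterpillar supplied by Theorem~\ref{hy.1}, so that $\irr(\mathcal{T}_{\mathscr{D}_i})$ and $\irr(\mathcal{T}_{\mathscr{D}_j})$ are evaluated on trees whose degree multiset is exactly $\mathscr{D}_i$ and $\mathscr{D}_j$. Since both sequences are non-increasing and satisfy the graphicality condition recorded in~(\ref{eq1}), such trees exist and the index is well defined on each class $\mathcal{T}_{\mathscr{D}}$.

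Next I would reduce the global hypothesis $\sum x_\ell \le \sum y_\ell$ to a finite sequence of unit increments. Writing $\mathscr{D}_j$ as obtained from $\mathscr{D}_i$ by repeatedly raising a single coordinate by one and re-sorting to preserve the non-increasing order, it suffices to prove the one-step statement: raising one degree by a unit, while keeping the sequence graphical, does not decrease $\irr$. For this I would combine the edge expression $\irr(T)=\sum_{uv\in E}\lvert d_u-d_v\rvert$ with the explicit association formula stated in the abstract, which writes $\irr(\mathcal{T}_{\mathscr{D}})$ in terms of $d_1,d_n$, the partial sum $\sum_{i=2}^{n-1} d_i$, and the quantity $\tfrac{\Delta+\delta}{2}$. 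Tracking how each summand responds to a unit increment, the linear terms $\sum_{i=2}^{n-1}d_i$ and $d_n-d_1$ move in the desired direction, and the remaining task is to control the quadratic contributions $d_1^2+d_n^2$ and $(n-2)\bigl(\tfrac{\Delta+\delta}{2}\bigr)^2$.

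The hard part will be precisely this control of the quadratic block across a unit increment, because raising a coordinate may change which vertex realizes $\Delta$ or $\delta$ and thereby reshuffle the non-increasing order; a careless step could momentarily lower $\bigl(\tfrac{\Delta+\delta}{2}\bigr)^2$ even as the total degree grows. I would handle this by splitting into cases according to whether the incremented coordinate is the maximum $d_1$, the minimum $d_n$, or an interior entry, and in each case bound the change in the quadratic terms below by the nonnegative change in the linear terms, using convexity of $t\mapsto t^2$ together with the monotonicity of $\Delta+\delta$ under a unit increase. Summing the per-step inequalities along the chain from $\mathscr{D}_i$ to $\mathscr{D}_j$ then yields $\irr(\mathcal{T}_{\mathscr{D}_i})\le \irr(\mathcal{T}_{\mathscr{D}_j})$.

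As a cleaner fallback that sidesteps the quadratic bookkeeping entirely, I would realize each increment by a purely local edge relocation on the representative tree, detaching a pendant vertex and reattaching it to a vertex of higher degree, and verify directly that $\sum_{uv\in E}\lvert d_u-d_v\rvert$ cannot drop under such a move by examining only the finitely many edges incident to the two affected vertices. This combinatorial route keeps all estimates elementary and is the safer option if the algebraic comparison proves delicate.
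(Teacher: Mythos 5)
The paper states Theorem~\ref{pro.se.1} in the preliminaries without any proof, so there is no internal argument to compare against; judged on its own merits, your proposal has a fatal structural gap at its very first reduction. Your whole strategy rests on connecting $\mathscr{D}_i$ to $\mathscr{D}_j$ by a chain of unit increments of a single coordinate, with every intermediate sequence remaining realizable by a tree of order $n$. But the degree sum of a tree is an invariant of its order: every tree on $n$ vertices has degree sum exactly $2(n-1)$ (and under the caterpillar spine-degree reading of length $n-1$ the sum is likewise forced by $n$ and the spine length). Consequently (i) a single unit increment destroys realizability outright, since the intermediate sums become odd or exceed $2(n-1)$, and (ii) under the theorem's hypotheses the inequality $\sum x_\ell \leqslant \sum y_\ell$ can only hold with equality, so your chain is empty and proves nothing. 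Worse, this observation exposes the statement itself as vacuous or false as written: with equal sums the theorem applies with the roles of $\mathscr{D}_i$ and $\mathscr{D}_j$ swapped, forcing $\irr(\mathcal{T}_{\mathscr{D}_i})=\irr(\mathcal{T}_{\mathscr{D}_j})$ for all pairs, which fails already at $n=4$: the path has degree sequence $(2,2,1,1)$ with $\irr(P_4)=2$ while the star has $(3,1,1,1)$ with $\irr(S_4)=6$, both of degree sum $6$. Any viable proof must first reinterpret the hypothesis (e.g.\ as a majorization relation between sequences of equal sum), which is a different theorem.

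Several subsidiary steps would also fail even if the reduction were repaired. Your claim that ``the index is well defined on each class'' is false: $\irr$ is not a function of the degree multiset alone, since trees in the same class $\mathcal{T}_{\mathscr{D}}$ can have different Albertson indices, so you must fix an extremal representative; moreover, choosing the maximizer from Theorem~\ref{hy.1} on \emph{both} sides is not the right comparison for an inequality between classes (one would want a consistent convention, or max on one side against min on the other). The quadratic formula you quote from the abstract, containing the term $(n-2)\bigl(\frac{\Delta+\delta}{2}\bigr)^2$, is derived in Proposition~\ref{prozagrebn1} only under the special hypothesis $d_2=\dots=d_{n-1}=(\Delta+\delta)/2$, so it cannot be invoked for general sequences. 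Finally, your fallback move --- detaching a pendant vertex and reattaching it to a vertex of higher degree --- is a degree \emph{transfer} (one coordinate goes up, another goes down), not a unit increment; it preserves the degree sum, which is indeed the only thing compatible with trees, but then it realizes a majorization step rather than the sum inequality in the statement, confirming that the hypothesis of the theorem, not your local-move analysis, is where the argument breaks down.
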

\begin{lemma}~\label{le.sigma2}
Let $T$ be a tree of order $n>0$ and a degree sequence $d=(d_1,d_2,d_3,d_4)$ where $d_4\geqslant d_3\geqslant d_2\geqslant d_1$, then Sigma index is: 
\[
\sigma(T)=\sum_{i=1}^{2} (d_i+1)(d_i-1)^2+\sum_{i=1}^{4}(d_i-d_{i+1})^2+\sum_{i=2}^{3}(d_i+2)(d_i-1)^2-2.
\]
\end{lemma}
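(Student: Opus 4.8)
The plan is to work directly from the definition $\sigma(T)=\sum_{uv\in E(T)}(d_u-d_v)^2$ and to exploit the fact that a caterpillar tree whose spine carries the degrees $d_1,d_2,d_3,d_4$ has an edge set that splits cleanly into two kinds of edges: the spine edges joining consecutive path vertices, and the pendant edges joining a spine vertex to one of its leaves. Since every leaf has degree $1$, each pendant edge incident to a spine vertex $v_i$ contributes exactly $(d_i-1)^2$ to $\sigma(T)$, while each spine edge $v_iv_{i+1}$ contributes $(d_i-d_{i+1})^2$. Thus the whole computation reduces to summing the spine contributions and counting, for each $i$, how many pendant edges hang at $v_i$; the alternative route through Theorem~\ref{sigmathm}, writing $\sigma(T)=F(T)-2M_2(T)$, is available as a cross-check but the edge partition is cleaner here.

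First I would record the spine contribution, which is immediately $\sum_i (d_i-d_{i+1})^2$ over consecutive path vertices and matches the middle sum of the claimed identity. Next I would read off the pendant counts from the degrees, using the ordering $d_4\geqslant d_3\geqslant d_2\geqslant d_1$ to fix the roles of the vertices: a spine endpoint has a single spine neighbour and hence carries $d_i-1$ leaves, whereas an interior spine vertex has two spine neighbours and carries $d_i-2$ leaves. Multiplying each leaf count by the per-edge value $(d_i-1)^2$ yields the pendant contribution
\[
(d_1-1)(d_1-1)^2+(d_4-1)(d_4-1)^2+(d_2-2)(d_2-1)^2+(d_3-2)(d_3-1)^2 .
\]

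The remaining, and principal, step is the algebraic reorganisation of this pendant term into the grouped form appearing in the statement, namely $\sum_{i=1}^{2}(d_i+1)(d_i-1)^2$ and $\sum_{i=2}^{3}(d_i+2)(d_i-1)^2$ together with the constant $-2$. The natural device is to rewrite each factor of the form $(d_i\pm c)(d_i-1)^2$ by adding and subtracting a suitable multiple of $(d_i-1)^2$, so that the endpoint cubes $(d_i-1)^3$ and the interior products $(d_i-2)(d_i-1)^2$ are absorbed into the advertised coefficients while the leftover constants collapse to $-2$. I expect the main obstacle to be precisely this bookkeeping: one must pin down unambiguously which indices play the endpoint and interior roles, reconcile the fact that the direct count produces terms in all four degrees, and then verify that the correction is genuinely the constant $-2$ rather than a degree-dependent remainder. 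Before committing to the final grouping I would run a small sanity check, evaluating both sides on an explicit instance such as the path $P_4$ or a short caterpillar, to confirm the coefficient pattern and the constant.
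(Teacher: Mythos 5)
Your edge-partition setup is sound: pendant edges at a spine vertex $v_i$ contribute $(d_i-1)^2$ each, spine edges contribute $(d_i-d_{i+1})^2$, and the leaf counts are $d_i-1$ at the two endpoints and $d_i-2$ at the interior vertices, which correctly yields
\[
\sigma(T)=(d_1-1)^3+(d_4-1)^3+(d_2-2)(d_2-1)^2+(d_3-2)(d_3-1)^2+\sum_{i=1}^{3}(d_i-d_{i+1})^2 .
\]
But the step you defer --- the ``algebraic reorganisation'' of this into the lemma's grouped form with constant $-2$ --- is exactly where the argument collapses, because no such reorganisation exists. Reading the first sum over the endpoints $i\in\{1,4\}$ (as the paper's general Theorem~\ref{thm.sigma} would suggest), the claimed right-hand side exceeds the display above by
\[
2(d_1-1)^2+2(d_4-1)^2+4(d_2-1)^2+4(d_3-1)^2-2,
\]
a degree-dependent remainder that no additive constant can absorb; reading it literally over $i=1,2$ changes the remainder but not its degree dependence. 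The sanity check you wisely planned would have caught this: on the sorted caterpillar with spine degrees $(1,2,3,4)$ the true value is $\sigma=34$, while the lemma's formula gives $28$ (literal reading) or $70$ (endpoint reading); under the literal ``tree of order $4$'' reading with sorted degrees $(1,1,2,2)$, the path $P_4$ has $\sigma=2$ but the formula gives $3$, and $K_{1,3}$ has $\sigma=12$ but the formula gives $2$. So the gap is not bookkeeping left to the reader --- the statement is false as printed, and your correct decomposition is in fact a refutation of it rather than a proof.

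For what it is worth, there is no paper proof to reconcile with: Lemma~\ref{le.sigma2} is stated in the preliminaries without proof, and it is internally inconsistent --- the middle sum $\sum_{i=1}^{4}(d_i-d_{i+1})^2$ invokes an undefined $d_5$, the first sum runs over $i=1,2$ where the $n=4$ case of Theorem~\ref{thm.sigma} would put $i\in\{1,4\}$, and that theorem's constant $2n-2=6$ contradicts the lemma's $-2$ (indeed Theorem~\ref{thm.sigma} itself disagrees with the direct computation above, its endpoint and interior coefficients differing from the true ones by $2(d_i-1)^2$ and $4(d_i-1)^2$ respectively). The most your approach can salvage is the corrected identity in the first display, which is what a repaired version of the lemma should assert.
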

\begin{proposition}~\label{classoftreessigma}
Let $\mathcal{T}_{n, \Delta}$ be a class of trees with $n$ vertices, let $T \in \mathcal{T}_{n, \Delta}$ be a tree, and let $v_0 \in V(T)$ be a vertex with maximum degree $\Delta$. For any support vertex $v_{\ell}$ in $T$, different from $v_0$, where $3<\deg(v_{\ell})<10$, then there exists another tree $T^{\prime} \in \mathcal{T}_{n, \Delta}$  that $\sigma(T^{\prime}) < \sigma(T)$.
\end{proposition}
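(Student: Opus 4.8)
The plan is to prove the statement by exhibiting an explicit \emph{degree-reducing} local transformation $T \mapsto T'$ at the support vertex $v_\ell$ and showing that it strictly lowers the $\sigma$-index while keeping $T'$ inside $\mathcal{T}_{n,\Delta}$. Write $k=\deg(v_\ell)$, so by hypothesis $4\le k\le 9$. Since $v_\ell$ is a support vertex, it has a pendant neighbour $u$ with $\deg(u)=1$; since $v_\ell\neq v_0$ we also have $k\le\Delta$, and $v_0$ continues to carry the maximum degree $\Delta$ throughout. I would define $T'$ by detaching one pendant from $v_\ell$ and re-inserting it so as to extend a pendant path far from $v_\ell$ (a ``path-ification'' of part of the leaf set). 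Because every vertex created or modified in the relocated piece has degree at most $2<4\le\Delta$, and because $\deg_{T'}(v_0)=\Delta$ is untouched, no vertex of $T'$ exceeds $\Delta$; hence $T'\in\mathcal{T}_{n,\Delta}$ and $|V(T')|=n$.

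Next I would compute $\sigma(T')-\sigma(T)$ through the edge-difference bookkeeping built into the definition $\sigma(G)=\sum_{uv\in E(G)}(d_u-d_v)^2$. Only the edges incident with $v_\ell$, together with the relocated pendant edge and the one or two new path edges, change their contribution. The relocated pendant edge at $v_\ell$ drops from the large imbalance $(k-1)^2$ to a bounded contribution from the new degree-$2$/degree-$1$ path edges, and each \emph{surviving} edge $v_\ell x$ shifts from $(k-d_x)^2$ to $(k-1-d_x)^2$, i.e.\ by exactly $1-2(k-d_x)$. Summing over the $k-1$ surviving neighbours, I obtain
\[
\sigma(T')-\sigma(T)= -\,(k-1)^2 + c + (k-1) - 2\!\!\sum_{x\sim v_\ell,\ x\neq u}\!\!\big(k-d_x\big),
\]
where $c$ is the small constant contributed by the new path edges. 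This expresses the change as a function of $k$ and of the degrees of the remaining neighbours of $v_\ell$.

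The final step is to reduce this to a one-variable sign condition. The dominant term $-(k-1)^2$ comes from destroying the high-imbalance pendant edge, while the neighbour-dependent sum is controlled through $1\le d_x\le\Delta$ together with the fact that at least one neighbour (a leaf) contributes favourably; for the configuration that is hardest to improve one may invoke the four-degree evaluation of Lemma~\ref{le.sigma2} to pin down the surviving contributions exactly. After collecting terms, the inequality $\sigma(T')-\sigma(T)<0$ becomes a quadratic condition in $k$ whose relevant roots are $3$ and $10$, so that it holds precisely on $3<k<10$, which is exactly the hypothesised range; this yields $\sigma(T')<\sigma(T)$ and completes the argument.

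The main obstacle I anticipate is exactly the neighbour-dependent sum $\sum_{x}(k-d_x)$: the sign of the change does not depend on $k$ alone but also on how large the degrees of the non-pendant neighbours of $v_\ell$ are, since neighbours of high degree push the change upward. The crux of a rigorous proof is therefore to bound this term uniformly---either by choosing \emph{which} pendant and branch to relocate so that the neighbour dependence cancels, or by identifying the worst-case neighbourhood and showing that even there the pendant-edge gain $-(k-1)^2$ dominates---so that the clean interval $3<k<10$ emerges rather than a threshold contaminated by $\Delta$.
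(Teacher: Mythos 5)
Your transformation and edge-bookkeeping are the right general idea (and match the only argument of this kind the paper contains), but the proposal stops exactly where the real difficulty begins, and your own closing paragraph concedes it. The decisive step --- ``after collecting terms, the inequality $\sigma(T')-\sigma(T)<0$ becomes a quadratic condition in $k$ whose relevant roots are $3$ and $10$'' --- is asserted, never derived, and cannot be true as a statement about $k$ alone: by your own displayed formula the change is $-(k-1)^2+c+(k-1)-2\sum_{x\sim v_\ell,\,x\neq u}(k-d_x)$, and the sum ranges with the neighbour degrees $d_x\in[1,\Delta]$, so no $k$-only quadratic (let alone one with roots $3$ and $10$) emerges. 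Concretely, take $k=4$ with the three surviving neighbours of $v_\ell$ of degree $\Delta=20$: each surviving edge's contribution rises from $(20-4)^2=256$ to $(20-3)^2=289$, a total increase of $99$, while destroying the pendant edge gains only $(k-1)^2=9$ plus bounded constants; hence $\sigma(T')>\sigma(T)$ and your relocation move fails outright. Nothing in the hypotheses of Proposition~\ref{classoftreessigma} excludes this configuration, so a complete proof must either select a genuinely different $T'$ in that regime or show such trees cannot be $\sigma$-minimal --- and that argument is entirely missing. The appeal to Lemma~\ref{le.sigma2} does not help: it evaluates $\sigma$ for one specific four-term degree sequence and says nothing about the neighbourhood of an arbitrary support vertex.

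For comparison: the paper never actually proves Proposition~\ref{classoftreessigma} (it is stated in the preliminaries without proof and then cited inside the proof of the sibling Proposition~\ref{classoftreesstrongsigma}), and the nearest argument --- the proof of Proposition~\ref{classoftreesstrongsigma} --- uses the same pendant-relocation $T\mapsto T'$ and stumbles at precisely the same point, discarding the neighbour-dependent sums $\sum_{i=3}^{\lambda}(\deg_T(y_i)-\lambda)^2-2\sum_{i=3}^{\lambda}\bigl((\lambda-1)-\deg_T(y_i)\bigr)^2$ via an unjustified inequality before concluding positivity. So your diagnosis of where the crux lies is accurate and your honesty about it is to your credit, but neither your proposal nor the paper closes the gap; in particular, neither derives the interval $3<\deg(v_\ell)<10$ from any computation, which strongly suggests that range is an artifact of the statement rather than a threshold any correct version of this argument would produce.
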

\begin{theorem}~\label{mainalb2}
    Let  $T$ be a tree of order $n$, a degree sequence is $\mathscr{D}=(d_1,\dots,d_n)$ where $d_n\geqslant \dots \geqslant d_1$, then Albertson index of tree $T$ is: 
    \[
    \irr(T)=d_1^2+d_n^2+\sum_{i=2}^{n-1} d_i^2+\sum_{i=2}^{n-1} d_i+d_n - d_1-2n-2.
    \]
\end{theorem}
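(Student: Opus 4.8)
The plan is to exploit the structure singled out in the surrounding development (cf. Theorem~\ref{hy.1} and Hypothesize~\ref{hy.2}): I would realise $T$ as a caterpillar whose spine is a path $u_1u_2\cdots u_n$ carrying the degrees $d_1\le d_2\le\cdots\le d_n$ in non-decreasing order along the path, with every off-spine vertex a leaf attached to the spine. Starting from the definition $\irr(T)=\sum_{uv\in E(T)}\lvert d_u-d_v\rvert$, I would partition $E(T)$ into the set of spine edges $u_iu_{i+1}$ and the set of pendant edges, and evaluate the two contributions separately before recombining them.

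For the spine edges the prescribed ordering is decisive. Since $d_1\le d_2\le\cdots\le d_n$, every term satisfies $\lvert d_{i+1}-d_i\rvert=d_{i+1}-d_i$, so the sum telescopes,
\[
\sum_{i=1}^{n-1}\lvert d_{i+1}-d_i\rvert=d_n-d_1 .
\]
This is precisely where the monotone arrangement is used, and it is the same mechanism that underlies the extremality claim of Theorem~\ref{hy.1}; it also explains the isolated $d_n-d_1$ occurring in the target expression.

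For the pendant edges I would count leaves by their point of attachment. A leaf has degree $1$, so each leaf hung on a spine vertex $u_i$ of degree $d_i$ contributes $\lvert d_i-1\rvert=d_i-1$. The two endpoints $u_1,u_n$ each have a single spine neighbour and therefore carry $d_1-1$ and $d_n-1$ leaves, whereas an interior vertex $u_i$ has two spine neighbours and carries $d_i-2$ leaves. Hence the pendant contribution is $(d_1-1)^2+(d_n-1)^2+\sum_{i=2}^{n-1}(d_i-2)(d_i-1)$. Expanding these products, recognising $\sum_{i=1}^{n}d_i^2=M_1(T)$ to package the quadratic part, peeling off the extreme squares $d_1^2,d_n^2$ from $\sum_{i=2}^{n-1}d_i^2$, and then collecting the remaining linear and constant terms together with the telescoped spine contribution is expected to yield the stated closed form.

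The main obstacle is not any deep idea but the bookkeeping in this final collection step. One must keep the endpoint/interior distinction straight — it is exactly what breaks the symmetry between $d_1$ and $d_n$ and produces the separated degree terms — combine it with the correct sign from the telescoping, and align the index ranges when the terms $d_1^2$ and $d_n^2$ are split off from the interior sum. Since sign slips and off-by-one range errors are easiest to commit here, I would sanity-check the resulting identity on a small caterpillar (for instance $n=3$ with $(d_1,d_2,d_3)=(1,2,3)$) before declaring the formula established.
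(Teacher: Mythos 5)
Your decomposition into spine and pendant edges is the natural one, and the telescoping $\sum_{i=1}^{n-1}\lvert d_{i+1}-d_i\rvert=d_n-d_1$ is correct, but the ``bookkeeping in the final collection step'' that you deferred is exactly where the argument collapses, because the stated identity is false. Carrying out your own plan honestly gives
\[
\irr(T)=(d_1-1)^2+(d_n-1)^2+\sum_{i=2}^{n-1}(d_i-2)(d_i-1)+d_n-d_1
= d_1^2+d_n^2+\sum_{i=2}^{n-1}d_i^2-3\sum_{i=2}^{n-1}d_i-3d_1-d_n+2n-2,
\]
which differs from the theorem's right-hand side by $4\sum_{i=2}^{n-1}d_i+2(d_1+d_n)-4n$, a quantity that is not identically zero. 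The sanity check you yourself proposed exposes this: for $(d_1,d_2,d_3)=(1,2,3)$ your caterpillar is the path $u_1u_2u_3$ with two leaves at $u_3$, whose Albertson index is $1+1+2+2=6$, while the claimed formula evaluates to $1+9+4+2+3-1-6-2=10$. So your construction and edge-partition, executed to the end, refute the statement rather than prove it; writing ``is expected to yield the stated closed form'' papers over a step that cannot succeed.

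There are two further structural problems worth naming. First, your realization is inconsistent with the hypothesis: if $d_1,\dots,d_n$ are the spine degrees and leaves are attached to realize them, the resulting tree has order $n+\sum_i(\text{leaves at }u_i)>n$, whereas the theorem asserts $T$ has order $n$ with degree sequence $(d_1,\dots,d_n)$. Under that literal reading the formula also fails: for $P_4$ with sorted degrees $(1,1,2,2)$ one has $\irr(P_4)=2$ while the formula gives $4$, and for the star $S_n$ one has $\irr(S_n)=(n-1)(n-2)$ while the formula gives $(n-1)^2+n-7$. Second, and more fundamentally, $\irr$ is not a function of the degree sequence alone (non-isomorphic trees with the same sequence have different Albertson indices), so no exact closed form of this shape can hold for all trees with a given $\mathscr{D}$; any correct version must fix a specific extremal arrangement, which the statement does not do. Note finally that the paper states Theorem~\ref{mainalb2} in the preliminaries without any proof, so there is no argument to reconcile yours with; your computation is in fact evidence that the printed identity (and the results such as Theorem~\ref{mainalb3} that import it verbatim) contains sign and coefficient errors, plausibly from mis-expanding the pendant-edge terms $(d_i-2)(d_i-1)$.
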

\begin{theorem}~\label{thm.sigma}
Let $T$ be a tree of order $n$,let $\mathscr{D}=(d_1,\dots,d_n)$ be a degree sequence, where $d_n\geqslant \dots \geqslant d_1$, then Sigma index among tree given as:
\[
\sigma(T)=\sum_{i\in\{1,n\}}(d_i+1)(d_i-1)^2+\sum_{i=2}^{n-1}(d_i+2)(d_i-1)^2+\sum_{i=2}^{n-1}(d_i-d_{i+1})^2+2n-2.
\]
\end{theorem}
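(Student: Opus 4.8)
The plan is to evaluate $\sigma(T)=\sum_{uv\in E(T)}(d_u-d_v)^2$ directly from its definition by partitioning the edge set of $T$, in exact parallel with the derivation of the Albertson index in Theorem~\ref{mainalb2}. Since the degree sequence is carried by the path (spine) vertices in the caterpillar framework of Hypothesize~\ref{hy.2} and Theorem~\ref{hy.1}, I would first fix the caterpillar whose spine is $v_1,\dots,v_n$ with $\deg_T(v_i)=d_i$, and split $E(T)$ into the spine edges $v_iv_{i+1}$ and the pendant (leaf) edges incident to each $v_i$. Every leaf is adjacent to exactly one spine vertex and has degree $1$, so a pendant edge at $v_i$ contributes $(d_i-1)^2$, while a spine edge contributes $(d_i-d_{i+1})^2$.

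Next I would count the pendant edges at each spine vertex. An interior vertex $v_i$ with $2\le i\le n-1$ has two spine neighbours and therefore $d_i-2$ leaves, contributing $(d_i-2)(d_i-1)^2$; each endpoint $v_1,v_n$ has one spine neighbour and hence $d_i-1$ leaves, contributing $(d_i-1)(d_i-1)^2=(d_i-1)^3$. Adding the spine edges gives the raw closed form
\[
\sigma(T)=(d_1-1)^3+(d_n-1)^3+\sum_{i=2}^{n-1}(d_i-2)(d_i-1)^2+\sum_{i=1}^{n-1}(d_i-d_{i+1})^2,
\]
after which the remaining work is algebraic regrouping: rewriting the cubic contributions in the shifted form $(d_i\pm c)(d_i-1)^2$ and absorbing the linear-in-$d_i$ remainders together with the edge-count identities $\sum_i(\text{leaves at }v_i)=(n-1)-\#\{\text{spine edges}\}$ and $|E(T)|=n-1$ to isolate the additive constant $2n-2$.

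As a cross-check I would rerun the computation through Theorem~\ref{sigmathm}, writing $\sigma(T)=F(T)-2M_2(T)=\sum_{v}d_v^3-2\sum_{uv\in E}d_ud_v$. The forgotten index $\sum_v d_v^3$ is read straight off the degree sequence, and $M_2(T)$ decomposes over the same spine/leaf partition, so the two routes must agree and thereby pin down every coefficient. The base case $n=4$ should reproduce Lemma~\ref{le.sigma2}, furnishing a concrete consistency test before attempting the general regrouping.

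The hard part will be the final bookkeeping rather than any conceptual step: matching the shifted factors $(d_i+1)$ and $(d_i+2)$ and the constant $2n-2$ against the raw contributions $(d_i-1)^3$ and $(d_i-2)(d_i-1)^2$ forces the same conventions on expressing pendant counts through $d_i$ that were used in Theorem~\ref{mainalb2}, and particular care is needed with the endpoint spine edge $v_1v_2$, which explains why the displayed spine sum runs only from $i=2$ to $n-1$. A clean way to control this is induction on $n$: assuming the formula for a spine of length $n-1$, I would attach one further path vertex together with its leaves and verify that the incremental change in $\sigma$ matches the increment predicted by the claimed expression, so that any discrepancy in the shifts or the constant is localised and resolved at the inductive step.
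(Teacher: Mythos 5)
Your edge-partition setup is correct as far as it goes, and the raw closed form you derive for a caterpillar with spine degrees $d_1,\dots,d_n$,
\[
R \;=\; (d_1-1)^3+(d_n-1)^3+\sum_{i=2}^{n-1}(d_i-2)(d_i-1)^2+\sum_{i=1}^{n-1}(d_i-d_{i+1})^2,
\]
is a true formula. But the step you defer as ``final bookkeeping'' is not bookkeeping; it is where the argument dies, because the theorem's displayed right-hand side $C$ is not an algebraic regrouping of $R$. Subtracting,
\[
C-R \;=\; 2(d_1-1)^2+2(d_n-1)^2+4\sum_{i=2}^{n-1}(d_i-1)^2-(d_1-d_2)^2+2n-2,
\]
and since $d_1\leqslant d_2$ and $d_1\geqslant 1$ give $(d_2-d_1)^2\leqslant (d_2-1)^2\leqslant 4(d_2-1)^2$, a term already present in the middle sum when $n\geqslant 3$, we get $C-R\geqslant 2n-2>0$ for every admissible degree list. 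So the stated identity strictly overcounts on \emph{every} caterpillar, and neither regrouping nor your proposed induction on spine length can close the gap: the constant $2n-2$ alone forces a spurious increment of $2$ at each inductive step. The literal reading (the $d_i$ being all $n$ vertex degrees of $T$) fares no better: for $P_4$ with degree sequence $(1,1,2,2)$ one has $\sigma(P_4)=2$, while the right-hand side evaluates to $3+4+1+6=14$; for general paths the right-hand side grows linearly in $n$ while $\sigma(P_n)=2$ stays bounded. The statement as printed is simply false under either interpretation.

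You should also know that the paper contains no proof of this theorem to reconcile with: it is stated in the preliminaries without proof and invoked verbatim as equation~(\ref{eqsigman1}) in the proof of Theorem~\ref{thm.sigman3}. The genuinely valuable part of your proposal is the pair of cross-checks, and running them first would have exposed the problem rather than confirmed the claim. The route through Theorem~\ref{sigmathm} gives $\sigma(P_4)=F(P_4)-2M_2(P_4)=18-16=2$, contradicting the formula's $14$; and the $n=4$ comparison with Lemma~\ref{le.sigma2} fails outright, since the lemma carries the constant $-2$ where the theorem would give $2n-2=6$, runs its first sum over $i=1,2$ rather than $i\in\{1,4\}$, and its middle sum even references an undefined $d_5$ --- so the two statements are mutually inconsistent within the paper itself. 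The only salvageable conclusion of your approach is your raw formula $R$, which is a correct theorem about caterpillars; the honest fix is to replace the displayed right-hand side by $R$ (equivalently, to correct the shifts $(d_i+1)$ and $(d_i+2)$ back to $(d_i-1)$ and $(d_i-2)$, restore the missing $i=1$ spine term, and delete the additive constant), not to attempt a derivation of the statement as printed.
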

\section{Main Result}\label{sec3}
In this section, we provided concerning the Albertson index and in particular a degree sequence from motivation in Proposition~\ref{prozagn1},\ref{prozagn2} which it employed in Theorems~\ref{mainalb3}, \ref{mainalb4} and \ref{thmalbn1}.

\subsection{Bounds on Trees with Albertson Index}~\label{subsec1}
\begin{proposition}~\label{prozagn1}
Let $T=(V,E)$ be a tree with $n=|V|$ vertices and $m=|E|$ edges, let $M_1(T)$ be the first Zagreb index with maximum degree $\Delta$, then
\[
\irr(T)\geq M_1(T)+\Delta(\Delta-1).
\]
\end{proposition}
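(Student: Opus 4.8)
The plan is to route the inequality through the exact formula for the Albertson index recorded in Theorem~\ref{mainalb2}, which already writes $\irr(T)$ in terms of the squared degrees that make up the first Zagreb index. Working in the non-decreasing convention $d_n\geqslant\dots\geqslant d_1$ used there, so that $d_n=\Delta$ and $d_1=\delta$, the first step is to observe that the three squared-degree contributions reassemble into the first Zagreb index, $d_1^2+d_n^2+\sum_{i=2}^{n-1}d_i^2=\sum_{i=1}^n d_i^2=M_1(T)$. Substituting this into Theorem~\ref{mainalb2} collapses the formula to
\[
\irr(T)=M_1(T)+\Bigl(\sum_{i=2}^{n-1}d_i+d_n-d_1-2n-2\Bigr),
\]
so the whole proposition reduces to showing that the remainder term $R:=\sum_{i=2}^{n-1}d_i+d_n-d_1-2n-2$ dominates $\Delta(\Delta-1)$.

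The second step is to locate where the maximum degree must enter. Since $d_n=\Delta$ and $d_1=\delta$, I would peel these off and control the interior sum $\sum_{i=2}^{n-1}d_i$ using the tree identity $\sum_{i=1}^n d_i=2(n-1)$ that follows from $m=n-1$ edges and the handshake relation. A useful reformulation of the target is that, by Lemma~\ref{lem1}, $\Delta(\Delta-1)=\irr(S_{\Delta+1})$, the irregularity of the star on $\Delta+1$ vertices; this suggests the correct bookkeeping device, namely that the vertex of degree $\Delta$ together with its neighborhood forms a local star, and the plan is to charge its neighbors' degrees inside $\sum_{i=2}^{n-1}d_i$ so as to manufacture the quadratic contribution, with the edge-count identity absorbing the residual linear terms $-\delta-2n-2$.

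The step I expect to be the genuine obstacle is exactly this second one, and it is worth flagging sharply: the remainder $R$ produced by Theorem~\ref{mainalb2} is at most \emph{linear} in the degree sequence, whereas $\Delta(\Delta-1)$ is \emph{quadratic} in $\Delta$, so no term-by-term estimate can close the gap. The real content must come from the structural constraints of a tree forcing the star neighborhood around the maximum-degree vertex to contribute quadratically, and I would need to verify this against potentially degenerate sequences — a near-path carrying a single high-degree vertex, for instance, is the natural stress test. Pinning down this quadratic-versus-linear discrepancy, identifying the extremal configuration (which Lemma~\ref{lem1} points toward being the star $S_n$), and confirming that the handshake simplification of $R$ is compatible with the claimed bound is where I would concentrate the argument, since it is the only place where the inequality is not a formal consequence of the identity already in hand.
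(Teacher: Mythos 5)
Your proposal cannot be completed, and the reason is visible inside your own outline: carrying your ``second step'' through to the end refutes the proposition rather than proving it. By the handshake identity for a tree, $\sum_{i=1}^{n} d_i = 2m = 2(n-1)$, hence $\sum_{i=2}^{n-1} d_i = 2(n-1) - d_1 - d_n$, and your remainder term collapses exactly to
\[
R=\sum_{i=2}^{n-1}d_i+d_n-d_1-2n-2 \;=\; -2d_1-4 \;=\; -2\delta-4 \;<\;0,
\]
with no freedom left for the ``local star around the maximum-degree vertex'' to contribute anything: $R$ is completely determined by the edge count and is independent of $\Delta$. So if the identity of Theorem~\ref{mainalb2} is taken at face value, it yields $\irr(T)=M_1(T)-2\delta-4<M_1(T)$, the opposite of the claimed $\irr(T)\geq M_1(T)+\Delta(\Delta-1)$. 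The quadratic-versus-linear mismatch you correctly flagged as the crux is not an obstacle that tree structure can overcome; it is fatal.

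In fact the proposition itself is false, as direct computation shows. For the star $S_n$ ($n\geq2$) one has $\irr(S_n)=(n-1)(n-2)$, $M_1(S_n)=n(n-1)$ and $\Delta(\Delta-1)=(n-1)(n-2)$, so the claimed inequality would force $0\geq n(n-1)$; for the path $P_n$ ($n\geq3$) it would force $2\geq 4n-4$. (Your starting identity, Theorem~\ref{mainalb2}, is also incorrect as stated: on $S_n$ its right-hand side equals $n^2-n-6$ while $\irr(S_n)=n^2-3n+2$, and the two agree only at $n=4$, so even the reduction in your first step rests on a false formula.) There is no proof in the paper to compare against --- Proposition~\ref{prozagn1} is stated bare and then invoked in later arguments --- and the paper contradicts it internally: in the proof of Theorem~\ref{thmalbn1} it computes $\irr(S_n)-M_1(S_n)=-2(n-1)<0$, which is incompatible with $\irr(T)\geq M_1(T)+\Delta(\Delta-1)\geq M_1(T)$. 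Your instinct to stress-test against degenerate configurations was exactly right; the star and the path are those stress tests, and they sink the statement rather than the proof strategy.
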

\begin{proposition}~\label{prozagn2}
Let $T=(V,E)$ be a tree where vertices set $V=\{v_1,\dots,v_i\}$ with $n=|V|$ vertices and edges set $E=\{e_1,\dots,e_j\}$ with $m=|E|$ edges, let $\mathscr{D}=(d_1,\dots,d_n)$ be a degree sequence, where $d_n\geqslant d_{n-1}\geqslant \dots \geqslant d_1$, let $M_1(T)$ be the first Zagreb index with maximum degree $\Delta$, then Albertson index satisfying 
\[\irr(T)\geqslant 2\sum_{v_i\in V(T)}\binom{d_i}{2}+\frac{4m}{n-1}-\Delta(\Delta-1).\]
\end{proposition}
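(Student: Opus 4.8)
The plan is to reduce the asserted inequality to the lower bound already recorded in Proposition~\ref{prozagn1}, namely $\irr(T)\geq M_1(T)+\Delta(\Delta-1)$, by rewriting the binomial sum on the right-hand side in terms of the first Zagreb index $M_1(T)$ and then exploiting the fact that a tree on $n$ vertices has exactly $m=n-1$ edges. The substantive content is a pair of elementary counting identities; once these are installed, the claim follows by comparing two lower bounds for $\irr(T)$, so the degree ordering $d_n\geq\cdots\geq d_1$ plays no role beyond fixing notation.

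First I would expand the binomial term using the handshake identity $\sum_{i=1}^n d_i=2m$. Since $2\binom{d_i}{2}=d_i(d_i-1)$, summing over all vertices gives $2\sum_{v_i\in V(T)}\binom{d_i}{2}=\sum_{i=1}^n d_i^2-\sum_{i=1}^n d_i=M_1(T)-2m$. Thus the right-hand side of the claim equals $M_1(T)-2m+\frac{4m}{n-1}-\Delta(\Delta-1)$. Substituting $m=n-1$ collapses the middle terms, since $-2m=-2(n-1)$ and $\frac{4m}{n-1}=4$, and the target reduces to proving $\irr(T)\geq M_1(T)-2n+6-\Delta(\Delta-1)$.

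Next I would invoke Proposition~\ref{prozagn1} to obtain $\irr(T)\geq M_1(T)+\Delta(\Delta-1)$. It then suffices to check that this bound dominates the reduced target, that is, $M_1(T)+\Delta(\Delta-1)\geq M_1(T)-2n+6-\Delta(\Delta-1)$. The $M_1(T)$ terms cancel and the inequality simplifies to $\Delta(\Delta-1)+n\geq 3$. Because $T$ is a tree with $n\geq 3$, the handshake count $\sum_i d_i=2(n-1)>n$ forces some vertex of degree at least two, so $\Delta\geq 2$; hence $\Delta(\Delta-1)+n\geq 2+3=5\geq 3$, and chaining the two bounds gives $\irr(T)\geq M_1(T)+\Delta(\Delta-1)\geq M_1(T)-2n+6-\Delta(\Delta-1)$, which is exactly the assertion.

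The hard part is not computational but structural: all of the weight rests on Proposition~\ref{prozagn1}, so the conclusion is only as sharp and as valid as that lemma. Within the present argument, the step demanding the most care is getting the constants right in $2\sum_i\binom{d_i}{2}=M_1(T)-2m$ and in the simplification $\frac{4m}{n-1}=4$; an error in either would shift the final threshold away from the clean inequality $\Delta(\Delta-1)+n\geq 3$. I would sanity-check the endpoints on the path $P_n$ and the star $S_n$ to confirm that the slack between the two lower bounds is genuinely nonnegative.
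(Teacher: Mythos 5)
Your algebraic reduction is correct as far as it goes: $2\sum_{i}\binom{d_i}{2}=\sum_i d_i^2-\sum_i d_i=M_1(T)-2m$, and with $m=n-1$ the claimed bound is indeed equivalent to $\irr(T)\geq M_1(T)-2n+6-\Delta(\Delta-1)$. The genuine gap is the step you yourself flagged and then did not verify: Proposition~\ref{prozagn1} is false, and the whole chain rests on it. For the star $S_n$ one has $\irr(S_n)=(n-1)(n-2)$ and $M_1(S_n)=n(n-1)$, so $\irr(S_n)-M_1(S_n)=-2(n-1)<0$ --- a computation that appears verbatim in the paper's own proof of Theorem~\ref{thmalbn1} --- which directly contradicts $\irr(T)\geq M_1(T)+\Delta(\Delta-1)$; the path $P_3$ gives $\irr=2$ against $M_1+\Delta(\Delta-1)=6+2=8$. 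Worse, your reduction makes visible that the proposition being proved is itself false: for the path $P_n$ with $n\geq 3$ we have $M_1(P_n)=4n-6$ and $\Delta=2$, so your reduced right-hand side is $(4n-6)-2n+6-2=2n-2$, while $\irr(P_n)=2$. The sanity check on $P_n$ that you proposed in your last paragraph but did not carry out would have exposed this immediately; no repair of the argument is possible because the statement fails on every path of order at least $3$.

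For comparison, the paper's proof follows the same skeleton as yours --- it uses the identity $M_1(T)=2\sum_{v_i\in V(T)}\binom{d_i}{2}+2m$ and invokes Proposition~\ref{prozagn1} to obtain $\irr(T)\geq M_1(T)$ --- so you have in essence reproduced its approach, but in a cleaner form that isolates exactly what would have to be true. The paper then attempts to justify the $\Delta(\Delta-1)$ correction through a case analysis on $\Delta=n-1$, $\Delta=n$, and $\Delta=n+i$, of which the last two are vacuous since $\Delta\leq n-1$ in any simple graph on $n$ vertices, and it additionally relies on the claim $d_1^2+d_n^2\leq 2m/(n-1)$ imported from Theorem~\ref{mainalb3}, which is false for any star with $n\geq 3$ (there $d_1^2+d_n^2=1+(n-1)^2$ while $2m/(n-1)=2$). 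So neither your proof nor the paper's is valid; the defect in both is the same unproved and false lemma, and your tighter bookkeeping has the incidental merit of turning the proposition into an easily refuted inequality.
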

\begin{proof}
Let $T=(V,E)$ be a tree where vertices set $V=\{v_1,\dots,v_i\}$ with $n=|V|$ vertices, let $\mathscr{D}=(d_1,\dots,d_n)$ be a degree sequence, where $d_n\geqslant d_{n-1}\geqslant \dots \geqslant d_1$, then we show in proof of Theorem~\ref{mainalb3} that $d_1^2+d_n^2\leq 2m/(n-1)$. Actually, the first Zagreb index\cite{Liu} known by
\begin{equation}~\label{eqqtrzagn1}
    M_1(T)=2\sum_{v_i\in V(T)}\binom{d_i}{2}+2m.
\end{equation}
From~(\ref{eqzagn3}) according to Proposition~\ref{prozagn1} we found $\irr(T)\geq M_1(T)+\Delta(\Delta-1)$. Thus, clearly
\begin{equation}~\label{eqqtrzagn2}
    \irr(T)\geq M_1(T).
\end{equation}
Therefore, when $d_1^2+d_n^2\leq 2m/(n-1)$ satisfying $(d_1+d_n)^2\leq 4m/(n-1)^2$, then we have $\irr(T)>4m/(n-1)^2$ holds to $\irr(T)>4m/(n-1)$, from~(\ref{eqqtrzagn1}),(\ref{eqqtrzagn2}) we have
\begin{equation}~\label{eqqtrzagn3}
\irr(T)\geqslant 2\sum_{v_i\in V(T)}\binom{d_i}{2}+\frac{4m}{n-1}.
\end{equation}
We need to identify the association for the maximum degree $\Delta$ given to prove the validity of the Albertson association by a degree sequence under the given condition $\Delta(\Delta-1)$. For this, we have: 
\case{1} If $\Delta=n-1$, from~(\ref{eqzagn1}) we have
\begin{equation}~\label{eqtrzangn4}
    m\left(\frac{2m}{\Delta}+\Delta-1\right)=(\Delta+1)(2m-(\Delta+2))\leq (\Delta+1)(\Delta+2).
\end{equation}
\case{2} If $\Delta=n$, then, this case by comparing with~(\ref{eqtrzangn4}) satisfying as
\begin{equation}~\label{eqtrzangn5}
    m\left(\frac{2m}{\Delta-1}+\Delta-2\right)=(\Delta)(2m-\Delta+1)\leq -\Delta(\Delta-1).
\end{equation}
\case{3} If $\Delta=n+i$ where $i\in \mathbb{N}$, for one case we consider $i=1$, then 
\begin{equation}~\label{eqtrzangn6}
    m\left(\frac{2m}{\Delta-2}+\Delta-3\right)=(\Delta-1)(2m-\Delta)\leq \Delta(\Delta-1).
\end{equation}
Thus, from~(\ref{eqtrzangn4}),(\ref{eqtrzangn5}) and (\ref{eqtrzangn6}) we have:
\[
2\sum_{v_i\in V(T)}\binom{d_i}{2}+\frac{4m}{n-1}\geqslant \Delta(\Delta-1).
\]
As desire.
\end{proof}

\begin{proposition}~\label{prozagrebn1}
Let $T$ be a tree with maximum degree $\Delta$ and minmum degree $\delta$, let $\mathscr{D}=(d_1,d_2,\dots,d_n)$ be a degree sequence, if $d_2=\dots=d_{n-1}=(\Delta+\delta)/2$, then
\[
\irr(T)+ M_1(T) \geqslant \sum_{i=2}^{n-1} d_i+\left(\frac{\Delta + \delta}{2}\right)^2.
\]
\end{proposition}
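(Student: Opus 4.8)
The plan is to derive the claim from Proposition~\ref{prozagn1} together with the structural hypothesis $d_2=\cdots=d_{n-1}=\mu$, where I write $\mu:=(\Delta+\delta)/2$ for brevity. The first thing I would record is that in any tree the minimum degree satisfies $\delta\geqslant 1$, hence $\mu\geqslant 1$; this lower bound on $\mu$ is what will ultimately drive the estimate. Since $n\geqslant 3$, the interior indices $2,\dots,n-1$ form a nonempty block on which every degree equals $\mu$, so that $\sum_{i=2}^{n-1}d_i=(n-2)\mu$ and $\sum_{i=2}^{n-1}d_i^2=(n-2)\mu^2$.

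First I would apply Proposition~\ref{prozagn1}, which gives $\irr(T)\geqslant M_1(T)+\Delta(\Delta-1)$, and add $M_1(T)$ to both sides to obtain
\[
\irr(T)+M_1(T)\;\geqslant\;2M_1(T)+\Delta(\Delta-1).
\]
Next I would expand the first Zagreb index using the hypothesis: splitting off the extreme terms, $M_1(T)=d_1^2+d_n^2+\sum_{i=2}^{n-1}d_i^2=d_1^2+d_n^2+(n-2)\mu^2$, while the right-hand side of the target inequality is $\sum_{i=2}^{n-1}d_i+\mu^2=(n-2)\mu+\mu^2$. Substituting these and moving every term to one side, the desired bound reduces to the single scalar inequality
\[
2d_1^2+2d_n^2+(2n-5)\mu^2+\Delta(\Delta-1)-(n-2)\mu\;\geqslant\;0.
\]

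The main point — and the step I expect to be the only real obstacle — is to show that the potentially negative contribution $-(n-2)\mu$ is absorbed. I would group it with the quadratic term as $\mu\bigl[(2n-5)\mu-(n-2)\bigr]$; since $\mu\geqslant 1$ and $n\geqslant 3$ we have $(2n-5)\mu-(n-2)\geqslant (2n-5)-(n-2)=n-3\geqslant 0$, so this product is nonnegative. The remaining terms $2d_1^2+2d_n^2+\Delta(\Delta-1)$ are manifestly nonnegative, which closes the argument; the boundary case $n=3$ (where $2n-5=1$ and $n-2=1$) is exactly $\mu(\mu-1)\geqslant 0$, again valid because $\mu\geqslant 1$. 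As a self-contained alternative that avoids Proposition~\ref{prozagn1}, one could instead insert the closed form of Theorem~\ref{mainalb2} for $\irr(T)$ directly, combine it with $M_1(T)=d_1^2+d_n^2+(n-2)\mu^2$, and then invoke the handshake identity $\sum_i d_i=2(n-1)$ (which under the hypothesis forces $\mu=2-2/n$) to verify the resulting inequality; I would keep the first route as the primary proof since it localises the difficulty into the single nonnegativity check above.
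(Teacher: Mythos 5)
Your proposal is correct as a derivation and takes a genuinely different route from the paper's. The paper works from the handshake identity $\Delta+(n-2)\lambda+\delta=2(n-1)$ to pin down $\lambda=2-2/n$, and then splits the target into the two sub-claims $M_1(T)\geqslant\left(\frac{\Delta+\delta}{2}\right)^2$ and $\irr(T)\geqslant\sum_{i=2}^{n-1}d_i$; the first is extracted from the identity $M_1(T)-(\Delta^2+\delta^2)=(n-2)\lambda^2$, but the second is never actually verified --- the paper's proof merely substitutes the closed form of Theorem~\ref{mainalb2} into $\irr(T)$ and ends, so it is incomplete at precisely the point where your argument is explicit. You instead invoke Proposition~\ref{prozagn1}, add $M_1(T)$ to both sides, use the hypothesis to write $M_1(T)=d_1^2+d_n^2+(n-2)\mu^2$ with $\mu=(\Delta+\delta)/2$, and reduce everything to the single scalar inequality $2d_1^2+2d_n^2+(2n-5)\mu^2+\Delta(\Delta-1)-(n-2)\mu\geqslant 0$; your algebra checks out ($2(n-2)\mu^2-\mu^2=(2n-5)\mu^2$), and the absorption step $\mu\bigl[(2n-5)\mu-(n-2)\bigr]\geqslant\mu(n-3)\geqslant 0$ for $\mu\geqslant 1$, $n\geqslant 3$ is sound, including the boundary case $n=3$. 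What each approach buys: yours is complete and localises the only nonobvious step into one nonnegativity check; the paper's avoids Proposition~\ref{prozagn1} and stays inside degree-sequence arithmetic, but is left unfinished.

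Two caveats you should be aware of. First, Proposition~\ref{prozagn1} is stated in the paper without proof, and it fails for concrete trees (for $P_3$ one has $\irr(P_3)=2$ while $M_1(P_3)+\Delta(\Delta-1)=6+2=8$), so your primary route inherits that risk; since the paper itself leans on this proposition in Theorems~\ref{mainalb3} and~\ref{mainalb4}, your use of it is legitimate within the paper's framework, but your fallback route via Theorem~\ref{mainalb2} and the handshake identity would be the safer one to make self-contained. Second, as your own remark about $\mu=2-2/n$ implies, the handshake identity forces the common interior degree to be non-integral for every $n\geqslant 3$, so the hypothesis $d_2=\dots=d_{n-1}=(\Delta+\delta)/2$ is never realised by an actual tree and the proposition is vacuously true; neither your proof nor the paper's acknowledges this, but it means any counterexample worry about the intermediate steps is moot on the stated hypothesis class.
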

\begin{proof}
Assume the degree sequence is $\mathscr{D}_1=(\Delta, \lambda, \lambda, \dots, \lambda, \delta)$, where $\lambda=(\Delta+\delta)/2$  and $n-2$ vertices of degree $\lambda$. Let $\mathscr{D}_2=(d_1,d_2,\dots,d_n)$ be a degree sequence, where $d_2=\dots=d_{n-1}=(\Delta+\delta)/2$. In fact, there are $n-2$ terms, then we need to prove this inequality:
\begin{equation}~\label{proveineq}
    \irr(T)+ M_1(T) \geqslant \sum_{i=2}^{n-1} d_i+\left(\frac{\Delta + \delta}{2}\right)^2.
\end{equation}
Hence, the sum of degree sequence $\mathscr{D}_2$ is 
\begin{equation}~\label{satrn1}
\sum_{i=2}^{n-1} \frac{\Delta + \delta}{2} = \frac{(n-2)(\Delta + \delta)}{2}.
\end{equation}
According to~(\ref{satrn1}) noticed that $\Delta + \delta = 4 - 4/n$. Then, by considering the degree sum given by:
\begin{equation}~\label{satrn2}
    \Delta + (n-2)\lambda + \delta = 2(n-1), \quad \Delta + \delta = 2\lambda.
\end{equation}
Thus, $n\lambda = 2(n-1)$, then $\lambda=2-2/n$. According to Theorem~\ref{thmsign1}, from~(\ref{satrn1}),(\ref{satrn2}) we have
\begin{equation}~\label{satrn3}
    M_1(T)-(\Delta^2+\delta^2) =(n-2)\left(\frac{\Delta + \delta}{2}\right)^2.
\end{equation}
Hence, 
\begin{gather*}
  M_1(T)-(\Delta^2+\delta^2)=(n-2)\left(\frac{\Delta + \delta}{2}\right)^2\\
  \sum_{i=1}^{n}d_i^2-(\Delta^2+\delta^2)  =\sum_{i=2}^{n-1}d_i^2.
\end{gather*}
Then we have: 
\begin{equation}~\label{resalbn1}
  \sum_{i=1}^{n}d_i^2- \sum_{i=2}^{n-1}d_i^2= d_1^2+d_n^2.
\end{equation}
If the degree sequence $\mathscr{D}_2$ is non-increasing then from~(\ref{resalbn1}) we find that $d_1^2+d_n^2$ is a maximum, according to~(\ref{satrn3}) we have: 
\begin{equation}~\label{resalbn2}
M_1(T) \geqslant \left(\frac{\Delta + \delta}{2}\right)^2.
\end{equation}
Now, we need to prove $\irr(T) \geq \sum_{i=2}^{n-1} d_i$, thus, we find from~(\ref{satrn1}) that the sum is $(n-2)(\Delta+\delta)/2$, where Albertson index given by: 
\[
 \irr(T)=d_1^2+d_n^2+\sum_{i=2}^{n-1} d_i^2+\sum_{i=2}^{n-1} d_i+d_n - d_1-2n-2.
\]
Thus, 
\[
 \irr(T)=d_1^2+d_n^2+(n-2)\left(\frac{\Delta + \delta}{2}\right)^2+\sum_{i=2}^{n-1} d_i+d_n - d_1-2n-2.
\]
As desire.
\end{proof}

\begin{theorem}~\label{mainalb3}
    Let  $T$ be a tree of order $n$, a degree sequence is $\mathscr{D}=(d_1,\dots,d_n)$ where $d_n\geqslant \dots \geqslant d_1$, let $M_1(T)$ be the first Zagreb index of $T$, then Albertson index of tree $T$ is: 
    \[
    \irr(T)=M_1(T)+\sum_{i=2}^{n-1} d_i+d_n - d_1-2n-2.
    \]
\end{theorem}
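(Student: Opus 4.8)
The plan is to derive this identity directly from the explicit Albertson-index formula already established in Theorem~\ref{mainalb2}, feeding in only the definition of the first Zagreb index. First I would recall that, under exactly the same hypotheses on $T$ and on the non-increasing degree sequence $\mathscr{D}=(d_1,\dots,d_n)$ with $d_n\geqslant\dots\geqslant d_1$, Theorem~\ref{mainalb2} supplies
\[
\irr(T)=d_1^2+d_n^2+\sum_{i=2}^{n-1} d_i^2+\sum_{i=2}^{n-1} d_i+d_n - d_1-2n-2.
\]
Since nothing about the hypotheses changes between the two statements, this formula is available verbatim, and the task reduces to rewriting its quadratic part.

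Next I would invoke the definition of the first Zagreb index, $M_1(T)=\sum_{i=1}^{n}d_i^2$, and peel off the extreme indices $i=1$ and $i=n$ from the summation. The crux of the argument is the elementary observation that
\[
M_1(T)=\sum_{i=1}^{n}d_i^2=d_1^2+\sum_{i=2}^{n-1}d_i^2+d_n^2,
\]
so that the three quadratic contributions $d_1^2$, $d_n^2$, and $\sum_{i=2}^{n-1}d_i^2$ that appear in the Theorem~\ref{mainalb2} expression reassemble precisely into $M_1(T)$. The only point demanding a moment's care is confirming that the index ranges dovetail correctly, so that no term $d_i^2$ is either omitted or counted twice when the split sum is recollected as the full Zagreb sum.

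Finally, I would substitute this identity back into the Theorem~\ref{mainalb2} formula, replacing the quadratic block by $M_1(T)$ while leaving the linear terms $\sum_{i=2}^{n-1} d_i+d_n-d_1$ and the constant $-2n-2$ untouched, which yields
\[
\irr(T)=M_1(T)+\sum_{i=2}^{n-1} d_i+d_n - d_1-2n-2,
\]
as claimed. I do not anticipate a genuine obstacle: the statement is in essence a corollary of Theorem~\ref{mainalb2}, and all the substantive work — in particular any handling of the endpoint degrees $d_1,d_n$ and the $-2n-2$ correction — has already been absorbed into that earlier result. The present proof is therefore a short rewriting step rather than a new computation.
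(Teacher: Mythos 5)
Your proposal is correct and takes essentially the same route as the paper's own proof, which likewise quotes the formula of Theorem~\ref{mainalb2} and replaces the quadratic block $d_1^2+d_n^2+\sum_{i=2}^{n-1}d_i^2$ by $M_1(T)=\sum_{i=1}^{n}d_i^2$ (the paper's equation~(\ref{eqzagn4})), leaving the linear and constant terms untouched. The only difference is cosmetic: the paper interleaves some inequalities, such as $\irr(T)\geq M_1(T)+\Delta(\Delta-1)$, that play no role in establishing the identity, whereas your version isolates the one substantive step, so nothing is missing.
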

\begin{proof}
Let $T=(V,E)$ be a tree with $n$ vertices and $m$ edges, let $\mathscr{D}=(d_1,\dots,d_n)$ be a degree sequence where $d_n\geqslant \dots \geqslant d_1$, for a vertex $v_\ell \in V$ whose a degree is at least three, the first Zagreb index known as 
\begin{equation}~\label{eqzagn1}
M_1(T)=\sum_{i=1}^{n}d_i^2\leq m\left(\frac{2m}{n-1}+n-2\right).
\end{equation}
We need to establish the association for a given topological index known first Zagreb index according to a specific association. According to Theorem~\ref{mainalb2} when a degree sequence defined  as $\mathscr{D}=(d_1,\dots,d_n)$, then we have 
\begin{equation}~\label{eqzagn2}
    \irr(T)=d_1^2+d_n^2+\sum_{i=2}^{n-1} d_i^2+\sum_{i=2}^{n-1} d_i+d_n - d_1-2n-2.
\end{equation}
From~(\ref{eqzagn1}) we have $d_1^2+d_n^2\leq 2m/(n-1)$, while every tree is a connected graph, then $M_(T)\leq m(m+1)$ and as we know $\irr(T)=m(m+1)$ when $T$ be a star graph. Thus, according to Proposition~\ref{prozagn1} we have
\begin{equation}~\label{eqzagn3}
    \irr(T)\geq M_1(T)+\Delta(\Delta-1).
\end{equation}
From~(\ref{eqzagn1}),(\ref{eqzagn2}) we have
\begin{equation}~\label{eqzagn4}
M_1(T)=d_1^2+d_n^2+\sum_{i=2}^{n-1} d_i^2.
\end{equation}
As desire.
\end{proof}
\begin{theorem}~\label{mainalb4}
Let $T=(V,E)$ be a regular tree with maximum degree $\Delta$ and minmum  degree $\delta$, then  an inequality involving the irregularity of $T$ by Albertson index satisfying: 
    \[
    \irr(T) \leq \delta \left( \frac{2\delta}{\Delta - 1} + \Delta - 1 \right) + \Delta (\Delta - 1).
    \]
\end{theorem}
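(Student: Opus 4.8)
The plan is to read the stated bound as a specialization of the general upper estimate for the first Zagreb index, namely~\eqref{eqzagn1}, transported to $\irr(T)$ through the exact identity of Theorem~\ref{mainalb3}. That theorem gives $\irr(T) = M_1(T) + \sum_{i=2}^{n-1} d_i + d_n - d_1 - 2n - 2$, so the proof splits naturally into two tasks: (i) bound $M_1(T)$ from above, and (ii) control the remaining linear degree terms, with the two contributions assembling into the right-hand side. Working from the equality in Theorem~\ref{mainalb3} rather than from the inequality of Proposition~\ref{prozagn1} is deliberate, since Proposition~\ref{prozagn1} bounds $\irr(T)$ from below and cannot be combined directly with an upper bound on $M_1(T)$; the term $\Delta(\Delta-1)$ that surfaces there will instead reappear as the correction in task (ii).

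For task (i) I would invoke~\eqref{eqzagn1}, $M_1(T) \le m\left(\tfrac{2m}{n-1}+n-2\right)$, and use the regularity hypothesis to pin the order parameters $m$ and $n$ to $\delta$ and $\Delta$. Because $T$ is a tree we may use the handshake identity $2m = \sum_i d_i$ together with $m = n-1$, which under the regular profile lets me substitute $\delta$ for $m$ and $\Delta-1$ for the factor $n-1$, so that the right-hand side of~\eqref{eqzagn1} becomes $\delta\left(\tfrac{2\delta}{\Delta-1}+\Delta-1\right)$. For task (ii) the leftover additive contribution $\sum_{i=2}^{n-1} d_i + d_n - d_1 - 2n - 2$ is then estimated against $\Delta(\Delta-1)$, exactly the correction appearing in Proposition~\ref{prozagn1}, giving the assembled bound $\delta\left(\tfrac{2\delta}{\Delta-1}+\Delta-1\right) + \Delta(\Delta-1)$.

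The main obstacle is genuine and conceptual: the phrase \emph{regular tree} is self-contradictory in the standard sense, since every tree on at least two vertices has leaves and hence $\delta = 1$, so no tree with $\Delta \ge 2$ can be regular. The hypothesis must therefore be reinterpreted --- most plausibly as a tree whose non-leaf vertices share a common degree, or as the extremal configuration that pins $m,n$ to $\delta,\Delta$ --- and the whole argument stands or falls on making that reading precise. A secondary, purely technical snag is an off-by-one in the substitution: matching the additive $n-2$ against $\Delta-1$ and simultaneously matching $\tfrac{2m}{n-1}$ against $\tfrac{2\delta}{\Delta-1}$ cannot both hold under a single identification of $n$ with $\Delta$, so I would have to track the identity $m = n-1$ carefully and verify that the residual term is nonnegative. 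That bookkeeping, rather than any deep inequality, is where I expect the real work to lie.
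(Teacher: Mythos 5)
Your reconstruction is, in fact, almost exactly the paper's own skeleton: the paper's proof also leans on the identity of Theorem~\ref{mainalb3}, imports the correction $\Delta(\Delta-1)$ from Proposition~\ref{prozagn1}, and in its Case~1 literally sets $m=\delta$ --- the same identification you propose for task~(i). But the two obstacles you flag are fatal, and the paper does not overcome them either. The substitution $m\mapsto\delta$, $n-1\mapsto\Delta-1$ in \eqref{eqzagn1} cannot be justified: every tree has $m=n-1$, and every tree on $n\geq 2$ vertices has a leaf, so $\delta=1$; hence $m=\delta$ forces $n=2$, while literal regularity forces $T\in\{K_1,K_2\}$, where $\Delta-1\leq 0$ makes the stated bound undefined or vacuous. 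Worse, under the only non-vacuous reading (drop regularity, allow $\delta=1$) the statement itself is false, so no bookkeeping of the residual linear term can rescue task~(ii): take the balanced double star on $8$ vertices, an edge $uv$ with three pendant vertices attached to each of $u$ and $v$. Then $\Delta=4$, $\delta=1$, $\irr(T)=6\cdot 3=18$, while the right-hand side equals $\left(\frac{2}{3}+3\right)+4\cdot 3=\frac{47}{3}<18$. Your suspicion that the residual term is not dominated by $\Delta(\Delta-1)$ is thus confirmed in the strongest possible way: the inequality fails outright.

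You should also know that the paper's argument, which you would otherwise be reconstructing, contains independent errors beyond the ones you anticipated. Its reformulation \eqref{eqqmain2} is not equivalent to the theorem: expanding the claimed right-hand side shows \eqref{eqqmain2} corresponds to $\delta\left(\frac{2\delta}{\Delta-1}+\Delta-1\right)-\Delta(\Delta-1)$, i.e.\ the sign of the $\Delta(\Delta-1)$ term is flipped. The imported inequality \eqref{eqtrzangn5}, $\Delta(2m-\Delta+1)\leq-\Delta(\Delta-1)$, simplifies upon dividing by $\Delta$ to $2m\leq 0$, which is false for every tree with an edge. And the paper's Case~1 chain of inequalities terminates with $M_1(T)$ still present on the right-hand side, never reaching the stated bound; the only sound step in the entire proof is $\irr(T)\leq m(\Delta-\delta)$, which is never connected to the claim. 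So your closing diagnosis is accurate but understated: the difficulty is not bookkeeping an off-by-one, it is that the theorem as stated admits no correct proof, and both your plan and the paper's proof founder on the same irreparable hypotheses.
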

\begin{proof}
Let $T$ be a regular tree of order $n$. Let $\Delta$ be the largest degree of any vertex in $T$ where $\Delta\geq 2$ with $\Delta\geq \delta$, then we have
\begin{equation}~\label{eqqmain1}
    \frac{2\delta}{\Delta - 1} + \Delta - 1 =\frac{2\delta+\Delta^2-2\Delta+1}{\Delta-1}.
\end{equation} 
Then, from~(\ref{eqqmain1}) we need to prove this inequality 
\begin{equation}~\label{eqqmain2}
    \irr(T) \leq \frac{-\Delta^3 + (\delta + 2)\Delta^2 - (2\delta + 1)\Delta + 2\delta^2 + \delta}{\Delta - 1}.
\end{equation}
According to Proposition~\ref{prozagn2}, from~(\ref{eqtrzangn5}) we obtain $(\Delta)(2m-\Delta+1)\leq -\Delta(\Delta-1)$, then foe all value of $m$, we will discus following cases.
\case{1} If $m=\delta$, it holds to $(\Delta)(2\delta-\Delta+1)\leq -\Delta(\Delta-1)$, then the inequality~(\ref{eqqmain2}) stay correct by considering Proposition~\ref{prozagn1} where we show $\irr(T)\geq M_1(T)+\Delta(\Delta-1)$, then clearly $\irr(T)\geq \Delta(\Delta-1)$, for all trees $T$ with minimum degree $\delta$  and maximum degree 
$\Delta$.  Then 
\begin{align*}
    \irr(T)&=M_1(T)+\sum_{i=2}^{n-1} d_i+d_n - d_1-2n-2\\
    &\leq M_1(T)+\sum_{i=2}^{n-1} d_i+d_n - d_1-2n-2+\Delta(\Delta-1)\\
    &\leq M_1(T)+\sum_{i=2}^{n-1} d_i+d_n - d_1-2\delta-2+\Delta(\Delta-1)\\
    & \leq M_1(T)+\frac{2\delta+\Delta^2-2\Delta+1}{\Delta-1}-2\delta-2+\Delta(\Delta-1).
\end{align*}
\case{2} If $m>\delta$, then we assume $\delta=\Delta$ it holds to
\begin{align*}
    \delta \left( \frac{2\delta}{\delta - 1} + \delta - 1 \right) - \delta (\delta - 1) &= \delta \cdot \frac{2\delta + (\delta - 1)^2}{\delta - 1} - \delta (\delta - 1)(\delta - 1)^2\\
&= \delta^3 + \delta - \delta^3 + 2\delta^2 - \delta\\
&= 2\delta^2
\frac{2\delta^2}{\delta - 1}.
\end{align*}
Hence, when $\Delta\geqslant \delta>1$, then $\frac{2\delta^2}{\delta - 1}>0$ where $\irr\geq \Delta-1$. Since $\delta \leq d_v \leq \Delta$, and if there are $n = |V(T)|$ vertices, then from~(\ref{eqqmain1}), for the values we considered under the given term, in this case, if $\Delta>\delta$, we find that
\begin{equation}~\label{eqqmain3}
 n \delta \leq \sum_{v \in V(T)} d_v \leq n \Delta.
\end{equation}
Then, from~(\ref{eqtrzangn5}), the inequality~(\ref{eqqmain3}) holds 
\begin{equation}~\label{eqqmain4}
n \delta \leq 2 m \leq n \Delta \quad \text{ and }\quad
\frac{n \delta}{2} \leq m \leq \frac{n \Delta}{2}. 
\end{equation}
Thus, we obtained the bound $2\delta/(\Delta - 1)$, for Albertson index, we should be noticed that
\begin{equation}~\label{eqqmain5}
\irr(T)=\begin{cases}
    (d_u - d_v) \quad & \text{ if } d_u \geq d_v,\\
     (d_v - d_u) \quad & \text{ if } d_v > d_u.
\end{cases}
\end{equation}
from~(\ref{eqqmain3}),(\ref{eqqmain4}) and (\ref{eqtrzangn5}) we find that $\irr(T) \leq m (\Delta - \delta) \leq \frac{n \Delta (\Delta - \delta)}{2}$. Actually, for case $m<\delta$ it is correct for  minimum degree $\delta$  and maximum degree $\Delta$. As desire.
\end{proof}
Theorem~\ref{thmalbn1} declare Albertson index with two bounds,  where we evaluate the effectiveness of the bounds, this bounds appears to be a graph invariant related to the irregularity or structural properties of a tree $T$, where the first bound is the first Zagreb index  involves bounds that depend on the number of vertices and the second bound given by: 
\[
\frac{2mn^2(\Delta - 1)+\delta-1}{n+\Delta}.
\]
The bound $\Delta(\Delta-1)$  amplifies the relationship with Albertson index, it is clear in Proposition~\ref{prozagn1},\ref{prozagn2}, then we employee that in Theorem~\ref{thmalbn1}, where the bound $\Delta-1$ become the bound less effective for stars or trees with maximum degree $\Delta$.
\begin{theorem}~\label{thmalbn1}
Let $T$ be a tree with $n$ vertices and $m$ edges with maximum degree $\Delta$ and minmum  degree $\delta$, let $M_1(T)$ be the first Zagreb index, then inequality of Albertson index $\irr$ satisfying 
\[
\irr(T) \geq M_1(T)  - \frac{2mn^2(\Delta - 1)+\delta-1}{n+\Delta}.
\]
\end{theorem}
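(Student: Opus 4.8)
The plan is to obtain this inequality as a soft consequence of Proposition~\ref{prozagn1}, which already supplies the \emph{sharper} lower bound $\irr(T) \geq M_1(T) + \Delta(\Delta-1)$. The target inequality replaces the additive term $\Delta(\Delta-1)$ by a \emph{subtracted} correction $\frac{2mn^2(\Delta - 1)+\delta-1}{n+\Delta}$, so the whole task reduces to checking that passing from the bound of Proposition~\ref{prozagn1} to the claimed one can only weaken it; that is, the claimed right-hand side sits at or below $M_1(T) + \Delta(\Delta-1)$, hence below $\irr(T)$.

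First I would record the nonnegativity of the two relevant quantities. For any nontrivial tree we have $n \geq 2$, $m = n-1 \geq 1$, $\Delta \geq 1$ and $\delta \geq 1$, so the numerator $2mn^2(\Delta-1) + \delta - 1$ is a sum of nonnegative terms and the denominator $n+\Delta$ is strictly positive, giving
\[
\frac{2mn^2(\Delta - 1)+\delta-1}{n+\Delta} \geq 0 .
\]
Likewise $\Delta(\Delta-1) \geq 0$ because $\Delta \geq 1$. With these two facts in hand, the conclusion is immediate by chaining: invoking Proposition~\ref{prozagn1} and then discarding the nonnegative additive and subtractive terms in turn,
\[
\irr(T) \;\geq\; M_1(T) + \Delta(\Delta-1) \;\geq\; M_1(T) \;\geq\; M_1(T) - \frac{2mn^2(\Delta - 1)+\delta-1}{n+\Delta},
\]
which is exactly the asserted bound.

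The one point that genuinely needs care — and what I would flag as the main, if modest, obstacle — is the degenerate boundary case $\Delta = 1$, which forces $T = K_2$ with $\delta = 1$ and $m = 1$; there the correction numerator vanishes and both sides collapse to $M_1(T) = \irr(T)$, so equality holds and the statement survives. If one instead wanted the \emph{tight} form of this estimate rather than the soft one above, the natural alternative is to start from Proposition~\ref{prozagn2}, substitute $2\sum_{v_i\in V(T)}\binom{d_i}{2} = M_1(T) - 2m$ from~(\ref{eqqtrzagn1}), and then compare the residual terms $-2m + \tfrac{4m}{n-1}$ against $-\frac{2mn^2(\Delta - 1)+\delta-1}{n+\Delta}$ using $m = n-1$ together with the degree bounds $\delta \leq d_v \leq \Delta$. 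That comparison is heavier arithmetic but entirely routine, and it is where the specific shape of the denominator $n+\Delta$ would have to be justified rather than merely absorbed.
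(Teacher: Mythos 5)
Your chain argument has a genuine gap: it rests entirely on Proposition~\ref{prozagn1}, and that proposition is false --- indeed it is refuted inside the paper's own proof of this very theorem. For the star $S_n$ one has $\irr(S_n)=(n-1)(n-2)$ and $M_1(S_n)=n(n-1)$, so $\irr(S_n)-M_1(S_n)=-2(n-1)<0$ (the paper computes exactly this difference in Case~1 of its proof of Theorem~\ref{thmalbn1}). Hence the middle link of your chain, $\irr(T)\geq M_1(T)$, already fails for stars, and the first link $\irr(T)\geq M_1(T)+\Delta(\Delta-1)$ fails even more badly. Your boundary check at $\Delta=1$ is also factually wrong: for $T=K_2$ we have $\irr(K_2)=0$ but $M_1(K_2)=2$, while the correction term vanishes, so the claimed inequality reads $0\geq 2$. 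The two sides do not ``collapse to equality''; the theorem as stated is simply false for $K_2$, and any correct proof must restrict to $n\geq 3$.

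The salvage for $n\geq 3$ needs neither Proposition~\ref{prozagn1} nor the heavier route through Proposition~\ref{prozagn2} that you sketch: for $n\geq 3$ every tree has $\Delta\geq 2$, $m=n-1$ and $\Delta\leq n-1$, so
\[
\frac{2mn^2(\Delta-1)+\delta-1}{n+\Delta}\;\geq\;\frac{2(n-1)n^2}{2n-1}\;>\;n(n-1)\;\geq\;M_1(T),
\]
where the last step uses that the star maximizes $M_1$ among trees of order $n$. The right-hand side of the theorem is therefore negative, while $\irr(T)\geq 0$, so the bound holds vacuously. For comparison, the paper's own proof proceeds by a case analysis on stars and paths (invoking Proposition~\ref{prozagn1} along the way), which neither covers all trees nor avoids the inconsistency above; your instinct that the statement should follow by softly weakening a stronger fact is sound, but the stronger fact is the trivial negativity of the right-hand side for $n\geq 3$, not Proposition~\ref{prozagn1}.
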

\begin{proof}
 Let $T=(V,E)$ be a tree, with a vertex $v\in V$, let $M_1(T)$ be the first Zagreb index, then if $T$ be a star tree denote by $S_n$ with $n\geq 3$, by considering $m=n-1$, let $\mathscr{D}=(d_1,\dots,d_n)$ be non-decreasing a degree sequence, then we have:
 \case{1} If $T$ be a star tree, with central vertex and pendent vertices,  Albertson index of star tree given by $\irr(S_n)=(n-1)(n-2)$, then we have 
 \begin{equation}~\label{mayren1}
  M_1(T)\geq 2(n-1), \quad M_1(S_n)=n(n-1).
 \end{equation}
 Hence, we need to show $\irr(S_n)- M_1(S_n)<0$. Therefore, when $T$ is a star tree it clear to see that for central vertex ad pendent vertices, so that, we have: 
 \begin{align*}
    \irr(S_n)- M_1(S_n)&=\sum_{uv\in E}\lvert d_u-d_v\rvert-\sum_{i=1}^{n}d_i^2\\
    &=(n-1)(n-2)-n(n-1)\\
    &=-2(n-1)<0.
 \end{align*}
Thus, we find that $\irr(S_n)<M_1(S_n)$. Actually, from~(\ref{mayren1}) and (\ref{mayren01}) holds the bound 
\begin{equation}~\label{eqqstarbound01}
    \irr(S_n) \geq \Delta^2 + \Delta - \frac{2 \Delta (\Delta + 1)^2 (\Delta - 1)}{2\Delta + 1}.
\end{equation}
Then, we have
\begin{equation}
    \irr(S_n) \geq M_1(S_n)  - \frac{2mn^2(\Delta - 1)+\delta-1}{n+\Delta}
\end{equation}
\case{2} If $T$ is path, from~(\ref{mayren1}), we have $M_1(G) \leq (4m^2)/n + (n(\Delta - \delta)^2)/4$,  and according to~\cite{Furtula}, for $\Delta> \delta$ and $\delta\geq 2$, it depends on the degree differences across all edges, then we have: 
 \begin{equation}~\label{mayren01}
     M_1(T)\leq \frac{(n+1)^2m^2}{2n(n-1)}.
 \end{equation}
 Thus, as we know $\Delta=\max(\deg(v))$ and $\delta=\min(\deg(v))$. When $n \geq 2$, then $\delta = 1$ associated with the tree has at least two leaves (degree 1). Now, we need to prove this inequality 
 \begin{equation}~\label{mayren2}
\irr(T) \geq M_1(T) - \frac{2(n-1)n^2(\Delta - 1) + \delta - 1}{n + \Delta}. \end{equation}
Let $P_n$ be a path  with $n$ vertices, then $ M_1(P_n) = 4n - 6$, according to proposition~\ref{prozagn1} states $\irr(T) \geq M_1(T) + \Delta(\Delta - 1)$. This provides a lower bound for $\irr(T)$. Since $\Delta(\Delta - 1) \geq 0$, $\irr(T)> M_1(T)$. Then, we have
\begin{equation}~\label{eqqirrmay1}
    \Delta(\Delta - 1) \geq -\frac{2 (n-1) n^2 (\Delta - 1)}{n + \Delta} = \frac{2 (n-1) n^2 (1 - \Delta)}{n + \Delta}.
\end{equation}
From~(\ref{eqqirrmay1}), according to Proposition~\ref{prozagn1}, we find $\irr(T)\geq M_1(T)+\Delta(\Delta-1)$, also we know $\Delta>\delta$, then $\Delta>\delta-1$,  from~(\ref{mayren01}), we find $m=n-1$ for any tree, then $(n+1)^2(n-1)^2 \leq (n+1)^2(n-1)^2(\Delta-1)$. Thus,
\begin{equation}~\label{mayren02}
M_1(T)\leq \frac{(n+1)^2(n-1)^2}{2n(n-1)}=\frac{n^2 + n - 1}{2}.
\end{equation}
Thus, let $k>0$, from~(\ref{mayren01}),(\ref{mayren02}), the equation~(\ref{eqqirrmay1}) by considering the bound~(\ref{eqqstarbound01}) holds the bound:
\begin{equation}~\label{eqqirrmay2}
    \irr(T) \geq M_1(T) - k \cdot \frac{2 (n-1) n^2 (\Delta - 1)}{n + \Delta}.
\end{equation}
Therefore, from~(\ref{eqqirrmay1}),(\ref{eqqirrmay2}) we noticed that where most of results we leverage bounds on the first Zagreb index $ M_1(T)$ as we show that
\begin{align*}
    \irr(T) &\geq M_1(T)- \frac{(n+1)^2(n-1)^2}{2n(n-1)}\\
    &\geq M_1(T)- \frac{(n+1)^2(n-1)^2(\Delta-1)}{n+\Delta}\\
    &\geq M_1(T)- \frac{(n+1)^2m^2(\Delta-1)}{n+\Delta}\\
    & \geq M_1(T)- \frac{(n+1)^2(n-1)^2(\Delta-1)+\delta-1}{n+\Delta}.
\end{align*}
As desire.
\end{proof}
\section{Bounds on Trees with Sigma Index}~\label{sec4}
In this section, many of the bounds associated with the maximum degree and minimum degree that were discussed in Subsection~\ref{subsec1} for the Albertson index are discussed for the sigma index and the association between the two topological indices.

\begin{proposition}~\label{classoftreesstrongsigma}
Let $T \in \mathcal{T}_{n, \Delta}$ be a tree, and let $v_0 \in V(T)$ be a vertex with maximum degree $\Delta$. For any strong support vertex $v_{\ell}$ in $T$, different from $v_0$, where $\deg(v_{\ell})\geqslant 3$, then there exists another tree $T^{\prime} \in \mathcal{T}_{n, \Delta}$ such that $\sigma(T^{\prime}) < \sigma(T)$.
\end{proposition}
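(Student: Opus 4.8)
The plan is to certify that $T$ is not $\sigma$-minimal by producing a local \emph{leaf-relocation} move at the strong support vertex $v_\ell$ that strictly lowers the Sigma index while holding both the order $n$ and the maximum degree $\Delta$ fixed. Write $d=\deg(v_\ell)\geq 3$. Since $v_\ell$ is a strong support vertex it carries at least two pendant neighbours $u_1,u_2$; denote its remaining neighbours by $w_1,\dots,w_{d-2}$ with $b_j=\deg(w_j)$. I would build $T'$ by deleting the pendant edge $v_\ell u_2$ and re-attaching $u_2$ at a carefully chosen vertex $z$, the two natural candidates being (i) the second pendant $u_1$, which turns $v_\ell u_1 u_2$ into a pendant path, and (ii) a pendant neighbour of the maximum-degree vertex $v_0$, when such a pendant exists. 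In either case $T'$ is a tree on the same $n$ vertices; the only altered degrees are $\deg(v_\ell)$ (down to $d-1$) and $\deg(z)$ (up from $1$ to $2$). Because $v_\ell\neq v_0$ and the move never touches $\deg(v_0)$, the vertex $v_0$ keeps degree $\Delta$ and no vertex exceeds $\Delta$, so $T'\in\mathcal{T}_{n,\Delta}$. It is precisely the strong-support hypothesis that makes a spare pendant available to move.

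Next I would evaluate $\sigma(T')-\sigma(T)$ straight from the definition, or through the identity $\sigma=F-2M_2$ of Theorem~\ref{sigmathm}, noting that only the edges incident to $v_\ell$, $u_2$ and $z$ change their endpoint degrees. For destination (i) the two pendant edges of cost $(d-1)^2$ are replaced by $v_\ell u_1$ of cost $(d-3)^2$ and $u_1u_2$ of cost $1$, yielding a pendant saving $(d-3)^2+1-2(d-1)^2=-d^2-2d+8$, quadratic in $d$; meanwhile each surviving edge $v_\ell w_j$ moves from $(d-b_j)^2$ to $(d-1-b_j)^2$ and contributes $1-2(d-b_j)$. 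Collecting terms gives a closed form such as $-3d^2+3d+6+2\sum_j b_j$ when $v_\ell$ has exactly two pendants, so the decision reduces to weighing the quadratic pendant saving against the linear term $2\sum_j b_j$ coming from the neighbours of $v_\ell$.

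The main obstacle is exactly the sign of this difference. Lowering $\deg(v_\ell)$ \emph{penalises} every edge joining $v_\ell$ to a higher-degree neighbour, and if $v_\ell$ sits next to a vertex of degree near $\Delta$ this penalty can outrun the quadratic saving; indeed destination (i) can strictly \emph{increase} $\sigma$ in that regime, which is the same phenomenon that forced the degree restriction in the companion Proposition~\ref{classoftreessigma}. The resolution will be to route the moved pendant to destination (ii): re-attaching $u_2$ to a pendant $q$ of $v_0$ relaxes the large penalty $(\Delta-1)^2$ on the edge $v_0q$ down to $(\Delta-2)^2$, a gain of order $\Delta$ that dominates the loss $2\Delta-2d+1$ at $v_\ell$ (their difference being $2d-4\geq 2$), so that $\sigma(T')<\sigma(T)$ holds even for small $d$ beside a large-degree vertex. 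I would therefore split into cases according to whether $v_0$, or a near-maximum-degree vertex reachable from $v_\ell$, possesses a pendant; in the complementary case every neighbour of the relevant high-degree vertex already has degree at least two, which bounds $\sum_j b_j$ sufficiently for destination (i) to succeed. The $\Delta$-preservation check is immediate throughout, since $v_0$ is never the vertex whose degree is modified, and the identity $\sigma=F-2M_2$ serves as an independent sanity check: relocating a pendant away from an over-branched vertex and onto a longer branch makes the degree sequence more balanced, which is the qualitative effect that lowers $\sigma$.
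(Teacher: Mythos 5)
Your local computations are correct, and your diagnosis of the danger regime for move (i) is exactly right --- indeed it refutes the argument the paper itself gives, since the paper's proof performs precisely your destination (i) (detach one pendant of the strong support vertex $y$ and hang it on the other pendant) and then asserts $\sigma(T)-\sigma(T')>0$ by silently discarding the quantity $\sum_{i=3}^{\lambda}(\deg_T(y_i)-\lambda)^2-2\sum_{i=3}^{\lambda}\bigl((\lambda-1)-\deg_T(y_i)\bigr)^2$, which is very negative when $v_\ell$ has high-degree neighbours; your identity $\sigma(T')-\sigma(T)=-3d^2+3d+6+2\sum_j b_j$ makes the failure explicit (already $d=3$, $b_1=7$ gives $+2$). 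However, your repair does not close the argument. First, your destination-(ii) estimate ``gain $2\Delta-3$ versus loss $2\Delta-2d+1$'' accounts for only \emph{one} high-degree neighbour, whereas $v_\ell$ may have up to $d-2$ of them while the gain at the pendant of $v_0$ is collected only once: for $d=4$ with two pendants and two neighbours of degree $\Delta$ (two stars of degree $\Delta=20$ joined through $v_\ell$), move (ii) yields $\sigma(T')-\sigma(T)=2\Delta-24=+16>0$, so (ii) fails even though $v_0$ is rich in pendants. Second, and decisively, your complementary-case claim is false: the absence of pendant neighbours at the high-degree vertex does not bound $\sum_j b_j$ at all.

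Concretely, let $w=v_0$ have degree $10$, with nine neighbours of degree $2$, each heading a pendant path of length two, and tenth neighbour $v_\ell$ of degree $3$ carrying two pendants, so $n=22$, $\Delta=10$, and $v_\ell$ is a strong support vertex distinct from $v_0$. Here $v_0$ has no pendant, so (ii) is unavailable; (i) gives $-27+9+6+2\cdot 10=+8$; relocating $u_2$ onto a far leaf again gives $+8$, and relocating it under a degree-$2$ neighbour of $w$ gives exactly $0$ --- every single-leaf move in your toolkit fails to decrease $\sigma$ strictly. Worse, no $T'$ exists at all (reading $\mathcal{T}_{n,\Delta}$ as trees of maximum degree exactly $\Delta$, which is the only reading under which your $\Delta$-preservation checks are meaningful; with ``at most $\Delta$'' the statement is trivial via a path): the non-hub degree sum is $32$ over $21$ vertices, so the hub's neighbour degrees sum to at most $21$; the hub cost $\sum_i(10-c_i)^2$ is then uniquely minimized by the profile $(3,2,\dots,2)$, which forces all remaining vertices to be leaves and reproduces $T$ itself with $\sigma(T)=642$, while every other profile gives $\sigma\geq 650$. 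Hence this $T$ minimizes $\sigma$ over $\mathcal{T}_{22,10}$ despite containing the forbidden strong support vertex, so the proposition as stated is false, and no routing of the moved pendant (nor the paper's own computation) can establish it. A correct statement needs an extra hypothesis limiting the degrees of the neighbours of $v_\ell$ --- compare the restriction $3<\deg(v_\ell)<10$ in Proposition~\ref{classoftreessigma} --- and under such a hypothesis your destination-(i) closed form already furnishes a clean proof whenever $2\sum_j b_j<3d^2-3d-6$.
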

\begin{proof}
 Let $T$ be a tree with $x$ vertex, let $y\neq x$ be a strong support vertex where $\deg(y)=\lambda\geqslant 3$ where vertex $y$ has maximum degree $\Delta$ in $T$, clearly, $\sigma(T)=\Delta(\Delta-1)^2$, let $\mathscr{N}_T(y)=\{y_1,y_2,\dots,y_{\lambda}\}$ be an open neighborhood  of $y$. In this case, we say $y_{\lambda}$ might be in $y$ or not, let $T^{\prime}$ be a tree compute from $T-\{y_1,y_2\}$ by linking with a pendant edge $y_2y_1$, where $T^{\prime} \in \mathcal{T}_{n, \Delta}$, we consider 
 \begin{equation}~\label{eqtreen1}
     \sigma(T)=\sum_{uv\in E(T)}\left( \deg_{T}(u)-\deg_{T}(v)\right)^2.
 \end{equation}
 And 
 \begin{equation}~\label{eqtreen2}
   \sigma(T^{\prime})=  \sum_{uv\in E(T^{\prime})}\left( \deg_{T^{\prime}}(u)-\deg_{T^{\prime}}(v)\right)^2.
 \end{equation}
 Then according to Proposition~\ref{classoftreessigma}, we proved for for any support vertex in $T$ the relation $\sigma(T^{\prime}) < \sigma(T)$. Now, we need to prove $\sigma(T^{\prime})-\sigma(T)<0$, thus: 
 \begin{align*}
 \sigma(T) -\sigma(T^{\prime})&=\sum_{uv\in E(T)}\left( \deg_{T}(u)-\deg_{T}(v)\right)^2-\sum_{uv\in E(T^{\prime})}\left( \deg_{T^{\prime}}(u)-\deg_{T^{\prime}}(v)\right)^2\\
   &=\left( \deg_{T}(y)-\deg_{T}(y_1)\right)^2+\left( \deg_{T}(y)-\deg_{T}(y_2)\right)^2+\sum_{i=3}^{\lambda}\left( \deg_{T}(y_i)-\deg_{T}(y)\right)^2-\\
   &-\left( \deg_{T^{\prime}}(y_1)-\deg_{T^{\prime}}(y_2)\right)^2-\left( \deg_{T^{\prime}}(y)-\deg_{T^{\prime}}(y_2)\right)^2-2\sum_{i=3}^{\lambda} \left( (\deg_{T}(y)-1)-\deg_{T}(y_i)\right)^2.\\
 \end{align*}
 Therefore, by using a strong support vertex $\lambda\geqslant 3$, from~(\ref{eqtreen2}) we have: 
  \begin{equation}~\label{eqtreen3}
   \sigma(T^{\prime})=  \sum_{uv\in E(T)}\left( \deg_T(u)-\deg_{T}(v)\right)^2.
 \end{equation}
 Hence,
 \begin{align*}
    \sigma(T) -\sigma(T^{\prime}) &=2\lambda^2-5\lambda+6+\sum_{i=3}^{\lambda}\left( \deg_{T}(y_i)-\lambda\right)^2-2-2\sum_{i=3}^{\lambda} \left( (\lambda-1)-\deg_{T}(y_i)\right)^2\\
   &>2\lambda^2-5\lambda+4\\
   &=7>0\quad \text{ for } \lambda=3.
 \end{align*}
 Thus,according to~(\ref{eqtreen1}),(\ref{eqtreen2}), we found $\sigma(T) -\sigma(T^{\prime})>0$. As desire.
\end{proof}
\begin{proposition}~\label{prozagnsigman1}
Let $T=(V,E)$ be a tree with $n=|V|$ vertices and $m=|E|$ edges, let $M_1(T)$ be the first Zagreb index with maximum degree $\Delta$, then
\[
\sigma(T)\geq M_1(T)+n\Delta^2(\Delta-1)+m.
\]
\end{proposition}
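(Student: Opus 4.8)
The plan is to mirror the strategy already used for the Albertson bound in Proposition~\ref{prozagn1}, replacing each first-power imbalance $|d_u-d_v|$ by its square. The natural starting point is the decomposition of the Sigma index from Theorem~\ref{sigmathm}, namely $\sigma(T)=F(T)-2M_2(T)$, where the forgotten term expands as $F(T)=\sum_{uv\in E}\left(d_u^2+d_v^2\right)=\sum_{v\in V}d_v^3$ and $M_2(T)=\sum_{uv\in E}d_ud_v$. First I would record the companion identity $M_1(T)=\sum_{v\in V}d_v^2$, so that $F(T)-M_1(T)=\sum_{v\in V}d_v^2\,(d_v-1)$. This last quantity is precisely the cubic surplus that the term $n\Delta^2(\Delta-1)$ on the right-hand side is meant to capture, which is why the bound carries a factor $\Delta^2$ rather than the $\Delta$ appearing in the Albertson case.

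Second, I would control this surplus by the maximum degree. Since $d_v\leqslant\Delta$ for every $v\in V(T)$, each summand obeys $d_v^2(d_v-1)\leqslant\Delta^2(\Delta-1)$, and summing over the $n$ vertices ties $F(T)-M_1(T)$ to the target quantity $n\Delta^2(\Delta-1)$. The residual additive constant $+m$ I would extract from the tree identity $m=n-1$ together with the per-edge accounting of the Sigma sum, following the same bookkeeping that produces the $+2m$ term in (\ref{eqqtrzagn1}) for the Zagreb index. At this stage I would also invoke the elementary inequality $(d_u-d_v)^2\geqslant |d_u-d_v|$, valid because degree imbalances are integers, which lets me transfer the lower bound $\irr(T)\geqslant M_1(T)+\Delta(\Delta-1)$ of Proposition~\ref{prozagn1} into the Sigma setting and thereby recover the $M_1(T)$ and $\Delta(\Delta-1)$ contributions sitting inside the larger claimed bound.

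The main obstacle will be the second Zagreb term $M_2(T)$. Because $\sigma(T)=F(T)-2M_2(T)$ carries the large subtracted mass $2M_2(T)$, any lower bound on $\sigma(T)$ forces an upper bound on $M_2(T)$ in terms of $M_1(T)$, $\Delta$, and $m$. Here I would use that each edge contributes $d_ud_v\leqslant\Delta\cdot\tfrac{d_u+d_v}{2}$, since $\min(d_u,d_v)\leqslant\tfrac{d_u+d_v}{2}$ and $\max(d_u,d_v)\leqslant\Delta$, which yields $M_2(T)\leqslant\tfrac{\Delta}{2}M_1(T)$. Feeding this into the decomposition gives $\sigma(T)\geqslant F(T)-\Delta\,M_1(T)$, and the delicate point is then to balance the cubic gain $n\Delta^2(\Delta-1)$ coming from $F(T)$ against the cubic loss induced by $2M_2(T)$.

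I expect the sharpness of the argument to rest on the extremal configurations in which the maximum degree $\Delta$ is concentrated at a single vertex, as in the star and starlike trees of Definitions~\ref{StardefTree} and~\ref{StarlikedefTree}; these are the cases where $F(T)-2M_2(T)$ and $M_1(T)+n\Delta^2(\Delta-1)+m$ come closest, and verifying the inequality there should pin down the constant and confirm the bound in general.
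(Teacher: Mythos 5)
Your plan cannot be completed, and the obstruction is already visible inside your own second step: the inequality there points the wrong way. To derive the claimed lower bound $\sigma(T)\geq M_1(T)+n\Delta^2(\Delta-1)+m$ from the decomposition $\sigma(T)=F(T)-2M_2(T)$, you would need the cubic surplus to satisfy $F(T)-M_1(T)=\sum_{v\in V}d_v^2(d_v-1)\geq n\Delta^2(\Delta-1)$, i.e.\ a \emph{lower} bound; what $d_v\leq\Delta$ gives you is only the \emph{upper} bound $F(T)-M_1(T)\leq n\Delta^2(\Delta-1)$, with equality forced only when every vertex has degree $\Delta$ --- impossible in a tree with $\Delta\geq 2$, since leaves exist. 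Worse, your own two estimates together \emph{refute} the statement: every edge satisfies $d_ud_v\geq 1$, so $M_2(T)\geq m$, whence
\[
\sigma(T)=F(T)-2M_2(T)\;\leq\; M_1(T)+n\Delta^2(\Delta-1)-2m,
\]
which is strictly smaller than the claimed right-hand side $M_1(T)+n\Delta^2(\Delta-1)+m$ for every tree with $m\geq 1$. The proposition is therefore false for \emph{all} trees, not merely delicate in extremal cases: for $P_3$ one has $\sigma=2$ against a right-hand side of $6+12+2=20$, and for the star $S_n$ (your proposed near-equality configuration) $\sigma(S_n)=(n-1)(n-2)^2\sim n^3$ against a right-hand side $\sim n^4$. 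No refinement of the $M_2$ bookkeeping can rescue this: your estimate $M_2(T)\leq\tfrac{\Delta}{2}M_1(T)$ only yields $\sigma(T)\geq F(T)-\Delta M_1(T)$, and since $F(T)=\sum_v d_v^3\leq\Delta M_1(T)$, that lower bound is nonpositive and hence vacuous.

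For completeness: the paper states this proposition with no proof at all, so there is no argument of the authors' to compare yours against. Your fallback chain --- $(d_u-d_v)^2\geq\lvert d_u-d_v\rvert$ for integer imbalances, hence $\sigma(T)\geq\irr(T)$, combined with Proposition~\ref{prozagn1} --- would at best give $\sigma(T)\geq M_1(T)+\Delta(\Delta-1)$, which misses the dominant term $n\Delta^2(\Delta-1)$ entirely; moreover Proposition~\ref{prozagn1} itself already fails for paths, where $\irr(P_n)=2$ while $M_1(P_n)=4n-6$, so it cannot serve as a foundation. The sound conclusion, which your step two essentially contains, is that the relation should be reversed: what is actually provable by your method is the upper bound $\sigma(T)\leq M_1(T)+n\Delta^2(\Delta-1)-2m$, and any correct lower bound for $\sigma(T)$ must replace $n\Delta^2(\Delta-1)$ by a quantity genuinely dominated by the edge-imbalance squares.
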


\begin{proposition}~\label{prozagrebn2}
Let $T$ be a tree with maximum degree $\Delta$ and minmum degree $\delta$, let $\mathscr{D}=(d_1,d_2,\dots,d_n)$ be a degree sequence, if $d_2=\dots=d_{n-1}=(\Delta+\delta)/2$, then
\[
\sigma(T)+ M_1(T) \geqslant \sum_{i=2}^{n-1} d_i+\left(\frac{\Delta + \delta}{2}\right)^2.
\]
\end{proposition}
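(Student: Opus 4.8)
The plan is to mirror the proof of Proposition~\ref{prozagrebn1}, simply replacing the Albertson index by the Sigma index in the final step. First I would fix the degree sequence in the form $\mathscr{D}_1=(\Delta,\lambda,\dots,\lambda,\delta)$ with $n-2$ interior entries equal to $\lambda=(\Delta+\delta)/2$, and invoke the tree identity $\sum_{i=1}^{n}d_i=2(n-1)$ to obtain $\Delta+(n-2)\lambda+\delta=2(n-1)$, equivalently $n\lambda=2(n-1)$. This forces $\lambda=2-2/n$ and $\Delta+\delta=4-4/n$, which are exactly the relations~(\ref{satrn1})--(\ref{satrn2}) already recorded in the proof of Proposition~\ref{prozagrebn1}, so no new computation is needed here.

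Second, I would recycle the first-Zagreb estimate~(\ref{resalbn2}) verbatim: by Theorem~\ref{thmsign1} one has $M_1(T)-(\Delta^2+\delta^2)=(n-2)\lambda^2$, and since the interior contribution $\sum_{i=2}^{n-1}d_i^2$ is nonnegative this yields $M_1(T)\geq\left(\frac{\Delta+\delta}{2}\right)^2$. This inequality is index-independent, so it transfers to the present setting unchanged.

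Third --- the only genuinely new ingredient --- I would establish $\sigma(T)\geq\sum_{i=2}^{n-1}d_i$ directly from the closed form of Theorem~\ref{thm.sigma}. Every summand appearing there, namely $(d_i+1)(d_i-1)^2$ for $i\in\{1,n\}$, together with $(d_i+2)(d_i-1)^2$ and $(d_i-d_{i+1})^2$ over the interior indices, is nonnegative, so $\sigma(T)\geq 2n-2$. It then suffices to verify $2n-2\geq\sum_{i=2}^{n-1}d_i$; substituting $d_i=\lambda=2-2/n$ gives $\sum_{i=2}^{n-1}d_i=(n-2)(2-2/n)=2(n-1)(n-2)/n$, and the elementary inequality $n(2n-2)=2n^2-2n\geq 2n^2-6n+4=2(n-1)(n-2)$ (equivalent to $n\geq 1$) closes the bound for every $n\geq 3$. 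Adding this to the Zagreb estimate of the second step produces $\sigma(T)+M_1(T)\geq\sum_{i=2}^{n-1}d_i+\left(\frac{\Delta+\delta}{2}\right)^2$, as claimed.

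The main obstacle I anticipate lies precisely in the third step. A naive term-by-term comparison $(d_i+2)(d_i-1)^2\geq d_i$ is false when $\lambda<2$, and here $\lambda=2-2/n<2$ is forced for all finite $n$, so one cannot bound $\sigma(T)$ summand-by-summand against $\sum_{i=2}^{n-1}d_i$. The resolution is to discard the quadratic terms entirely and lean on the free additive constant $2n-2$ in the Sigma formula, which --- as the computation above shows --- already dominates the interior degree sum. This is the one place where the Albertson argument does not copy over mechanically and where the constant $2n-2$ from Theorem~\ref{thm.sigma} does the essential work.
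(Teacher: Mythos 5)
Your proof is correct within the paper's framework, but it takes a genuinely different route at the decisive step. Both arguments share the same setup---the degree-sum relations $n\lambda=2(n-1)$, $\lambda=2-2/n$ with $\lambda=(\Delta+\delta)/2$, and the Zagreb bound $M_1(T)\geqslant\lambda^2$ recycled from the proof of Proposition~\ref{prozagrebn1}---but they then diverge. The paper substitutes the identity of Theorem~\ref{thm.sigman3}, namely $\sigma(T)=M_1(T)+d_1^3+d_n^3-d_1-d_n-d_1^2-d_n^2+\sum_{i=2}^{n-1}(d_i-d_{i+1})^2+4$, and from it extracts only the lower bound $\sigma(T)\geqslant d_1^3+d_n^3+\lambda^2+\delta$, stopping there; the comparison with $\sum_{i=2}^{n-1}d_i$ is never made explicit, so the paper's own proof does not actually close the claimed inequality. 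You instead split the target additively into $M_1(T)\geqslant\lambda^2$ and $\sigma(T)\geqslant\sum_{i=2}^{n-1}d_i$, and you prove the second bound from Theorem~\ref{thm.sigma} by discarding all the manifestly nonnegative summands, retaining the additive constant $2n-2$, and verifying $2(n-1)\geqslant(n-2)\lambda$ arithmetically; your diagnosis that a term-by-term comparison $(d_i+2)(d_i-1)^2\geqslant d_i$ fails for $\lambda<2$ is accurate, and leaning on the constant $2n-2$ is exactly what repairs it. Your key step is in fact more robust than you present it: by the handshake lemma, for \emph{any} tree $\sum_{i=2}^{n-1}d_i=2(n-1)-d_1-d_n\leqslant 2n-4<2n-2$, so the hypothesis $d_2=\dots=d_{n-1}=(\Delta+\delta)/2$ is not needed for that half of the argument at all, which makes your proof strictly more general than the statement requires. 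Two cosmetic points: your appeal to Theorem~\ref{thmsign1} for the identity $M_1(T)-(\Delta^2+\delta^2)=(n-2)\lambda^2$ reproduces a mislabel already present in the paper (that theorem concerns the sigma--Zagreb inequality, not this identity, which is just the definition of $M_1$ under the hypothesis); and in your second step, nonnegativity of the interior contribution alone gives $M_1(T)\geqslant\Delta^2+\delta^2$, after which you should note the elementary estimate $\Delta^2+\delta^2\geqslant\left(\frac{\Delta+\delta}{2}\right)^2$ (or use $(n-2)\lambda^2\geqslant\lambda^2$ for $n\geqslant 3$) to land on the stated bound. Neither remark affects the validity of your argument.
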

\begin{proof}
Assume the degree sequence is $\mathscr{D}_1=(\Delta, \lambda, \lambda, \dots, \lambda, \delta)$, where $\lambda=(\Delta+\delta)/2$  and $n-2$ vertices of degree $\lambda$. Let $\mathscr{D}_2=(d_1,d_2,\dots,d_n)$ be a degree sequence, where $d_2=\dots=d_{n-1}=(\Delta+\delta)/2$. According to Proposition~\ref{prozagrebn1} we find that for a degree sequence $\mathscr{D}_2$ as 
\begin{equation}~\label{satrn001}
\sum_{i=2}^{n-1} \frac{\Delta + \delta}{2} = \frac{(n-2)(\Delta + \delta)}{2}.
\end{equation}
According to~(\ref{satrn1}), we provided 
\begin{equation}~\label{resalbn3}
M_1(T) \geqslant \left(\frac{\Delta + \delta}{2}\right)^2.
\end{equation}
Now, we find from~(\ref{satrn1}) that the sum is $(n-2)(\Delta+\delta)/2$, where Sigma index in Theorem~\ref{thm.sigman3} given by: 
\begin{equation}~\label{eqqsigpron1}
    \sigma(T)=M_1(T)+d_1^3 + d_n^3 - d_1 - d_n - d_1^2 - d_n^2 + \sum_{i=2}^{n-1} (d_i - d_{i+1})^2 + 4.
\end{equation}
Thus, 
\[
\sigma(T)\geq d_1^3 + d_n^3+ \left(\frac{\Delta + \delta}{2}\right)^2 + \delta.
\]
As desire.
\end{proof}

\begin{theorem}~\label{thm.sigman3}
Let $T$ be a tree of order $n$,let $\mathscr{D}=(d_1,\dots,d_n)$ be a degree sequence, where $d_n\geqslant \dots \geqslant d_1$, let $M_1(T)$ be the first Zagreb index, then Sigma index among tree given as:
\[
\sigma(T)=M_1(T)+d_1^3 + d_n^3 - d_1 - d_n - d_1^2 - d_n^2 + \sum_{i=2}^{n-1} (d_i - d_{i+1})^2 + 4.
\]
\end{theorem}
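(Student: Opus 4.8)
The plan is to derive the identity by rewriting the formula of Theorem~\ref{thm.sigma} so that its quadratic contributions are absorbed into the first Zagreb index, in direct analogy with the way Theorem~\ref{mainalb3} is obtained from Theorem~\ref{mainalb2}. First I would record, from equation~(\ref{eqzagn4}), the decomposition $M_1(T)=\sum_{i=1}^{n}d_i^2=d_1^2+d_n^2+\sum_{i=2}^{n-1}d_i^2$, which is the device that lets one trade a full sum of squares for a single $M_1(T)$ term together with explicit boundary corrections at the indices $1$ and $n$.

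Next I would expand the two polynomial blocks appearing in Theorem~\ref{thm.sigma}. For the boundary indices $i\in\{1,n\}$ one has $(d_i+1)(d_i-1)^2=d_i^3-d_i^2-d_i+1$, and for the interior indices $2\le i\le n-1$ one has $(d_i+2)(d_i-1)^2=d_i^3-3d_i+2$. Substituting these expansions turns $\sigma(T)$ into a sum of cubic, quadratic, linear and constant terms, plus the unchanged difference block $\sum_{i=2}^{n-1}(d_i-d_{i+1})^2$ and the additive constant $2n-2$ carried over from Theorem~\ref{thm.sigma}.

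The collecting step is then to recognise the emerging sum of squares as (part of) $M_1(T)$ and to peel off the boundary terms. Matching coefficients, the two boundary blocks should supply $d_1^3+d_n^3-d_1^2-d_n^2-d_1-d_n$ together with a constant, the interior blocks should supply the remaining structure, and the difference block is retained verbatim. Finally I would reconcile the additive constants, confirming that the interior constant contributions taken against the $2n-2$ term collapse to the single constant $+4$ demanded by the statement.

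The main obstacle will be the bookkeeping of the interior cubic and linear terms $\sum_{i=2}^{n-1}(d_i^3-3d_i)$, which do not appear explicitly in the target formula and must therefore be shown to be absorbed. I would isolate this interior block and compare it term by term with the interior square sum $\sum_{i=2}^{n-1}d_i^2$ drawn from $M_1(T)$, invoking the tree degree-sum constraint $\sum_{i=1}^{n}d_i=2(n-1)$ and the monotone ordering $d_n\ge\cdots\ge d_1$ to close the comparison. Controlling this interior part, rather than the boundary terms or the final constant, is where the argument needs the most care.
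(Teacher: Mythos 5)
Your route coincides with the paper's own proof: it too starts from Theorem~\ref{thm.sigma}, expands the boundary block exactly as you do (your expansion $(d_i+1)(d_i-1)^2=d_i^3-d_i^2-d_i+1$ is correct, and agrees with the paper's~(\ref{eqsigman3}) up to a sign typo there), retains the difference block, and tries to recognise the remainder as $M_1(T)$. But the step you yourself flag as ``the main obstacle'' is a genuine, unfixable gap, not bookkeeping. After expansion the interior block contributes $\sum_{i=2}^{n-1}(d_i^3-3d_i)+2(n-2)$, and the constants total $2+2(n-2)+(2n-2)=4n-4$; matching this against the asserted formula therefore forces the identity
\[
\sum_{i=2}^{n-1}\bigl(d_i^3-d_i^2-3d_i\bigr)\;=\;d_1^2+d_n^2+8-4n,
\]
which no appeal to $\sum_{i=1}^{n}d_i=2(n-1)$ or to the monotone ordering can rescue: a cubic symmetric function of the interior degrees is not determined by $n$ and the two extreme degrees. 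Concretely, for the path $P_4$, with $(d_1,d_2,d_3,d_4)=(1,1,2,2)$, the left side is $(-3)+(-2)=-5$ while the right side is $1+4+8-16=-3$. Indeed the theorem itself fails on this example: $\sigma(P_4)=1+0+1=2$, whereas the stated right-hand side evaluates to $10+9-3-5+1+4=16$.

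For comparison, the paper's own proof ``closes'' exactly this step via the identity~(\ref{eqsigman2}), namely $\sum_{i=2}^{n-1}(d_i+2)(d_i-1)^2=\sum_{i=2}^{n-1}d_i^2-\sum_{i=2}^{n-1}(d_i+2)$, which is false already per term (at $d_i=2$ the left side gives $4$, the right side $0$), and its constant reconciliation produces $2$ where the statement demands $4$. So your instinct located precisely the fault line on which the paper's derivation breaks, but your proposed repair (absorbing the interior cubics into the interior part of $M_1(T)$ using the degree-sum constraint and monotonicity) cannot succeed: the discrepancy $\sum_{i=2}^{n-1}(d_i^3-d_i^2-3d_i)$ varies between trees of the same order (it is $-5$ for $P_4$ but $-6$ for the star $S_4$), so no universal constant such as $+4$ can compensate. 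Note also that the starting point itself is unsound: Theorem~\ref{thm.sigma} already fails on $P_4$, where it yields $3+4+1+6=14$ against the true value $\sigma(P_4)=2$, so no amount of careful collection from that formula can produce a correct identity.
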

\begin{proof}
Let $T$ be a tree with $n$ vertices and $m$ edges, let $\mathscr{D}=(d_1,\dots,d_n)$ be a degree sequence where $d_n\geqslant\dots\geqslant d_1$ with vertex $v_\ell \in V(T)$ where $\deg_T(v_\ell)=\lambda\geqslant 3$, let $M_1(T)$ be the first Zagreb index given as $M_1(T)=\sum_{i=1}^{n} d_i^2$. Assume Sigma index~\ref{thm.sigma} given by
\begin{equation}~\label{eqsigman1}
 \sigma(T)=\sum_{i\in\{1,n\}}(d_i+1)(d_i-1)^2+\sum_{i=2}^{n-1}(d_i+2)(d_i-1)^2+\sum_{i=2}^{n-1}(d_i-d_{i+1})^2+2n-2.   
\end{equation}
Then, according to the term $d_n\geqslant\dots\geqslant d_1$, we have
\begin{equation}~\label{eqsigman2}
 \sum_{i=2}^{n-1}(d_i+2)(d_i-1)^2=  \sum_{i=2}^{n-1} d_i^2- \sum_{i=2}^{n-1}(d_i+2).
\end{equation}
Hence, from~(\ref{eqsigman1}) established for $M_1(T)$ by simplify the term as 
\begin{equation}~\label{eqsigman3}
  \sum_{i\in\{1,n\}}(d_i+1)(d_i-1)^2 =d_1^3 + d_n^3 -d_1^2+d_n^2- d_1 - d_n + 2 .
\end{equation}
Also, by simplify the last term with condition $d_i-d_{i+1}>0$ where $\lambda-1>2$ by considering in this case $\deg_T(v)=\lambda$, and $\deg(v_1)=\dots=\deg_T(v_i)=1$, then 
\begin{equation}~\label{eqsigman4}
  \sum_{i=2}^{n-1}(d_i-d_{i+1})^2= d_2^2 + d_n^2 + 2\sum_{i=3}^{n-1} d_i^2 - 2\sum_{i=2}^{n-1} d_i d_{i+1}
\end{equation}
when $d_i-d_{i+1}<0$, then $d_i<d_{i+1}$, that is mean the degree sequence is increasing then the term $d_n\geqslant\dots\geqslant d_1$ is not valid because the term in this case~(\ref{eqsigman4}) does not change, thus we have just term $d_i-d_{i+1}>0$. From~(\ref{eqsigman1}), and according to Theorem~\ref{mainalb3}, and Proposition~\ref{prozagnsigman1} we obtained $\sigma(T)\geq M_1(T)+n\Delta^2(\Delta-1)+m$, then we find that $M_1(T)=d_1^2 + \sum_{i=2}^{n-1} d_i^2 + d_n^2$. The constants is simplify as $-2(n-2) + 2n - 2 = -2n + 4 + 2n - 2 = 2$. As desire.

\end{proof}

\begin{theorem}~\label{thmsign1}
Let $T$ be a tree with $n$ vertices and $m$ edges with maximum degree $\Delta$ and minmum  degree $\delta$, let $M_1(T)$ be the first Zagreb index, then inequality of Sigma index $\irr$ satisfying 
\[
\sigma(T) \geq M_1(T)  - \frac{2mn^2(\Delta - 1)+\delta-1}{n+\Delta}.
\]
\end{theorem}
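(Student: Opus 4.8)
The plan is to mirror the proof of Theorem~\ref{thmalbn1}, replacing the Albertson lower bound of Proposition~\ref{prozagn1} with its Sigma-index counterpart, Proposition~\ref{prozagnsigman1}. The driving observation is that under that proposition $\sigma(T)$ already dominates $M_1(T)$, while the fraction subtracted on the right-hand side is nonnegative; hence the claimed inequality collapses to a comparison between $\sigma(T)$ and $M_1(T)$ alone.

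First I would invoke Proposition~\ref{prozagnsigman1}, which gives
\[
\sigma(T) \geq M_1(T) + n\Delta^2(\Delta-1) + m.
\]
Since $T$ is a tree we have $\Delta \geq 1$, $m = n-1 \geq 0$, and $n > 0$, so the correction term $n\Delta^2(\Delta-1) + m$ is nonnegative and therefore $\sigma(T) \geq M_1(T)$. Next I would check that the subtracted fraction is nonnegative: because $\Delta \geq 1$ forces $\Delta - 1 \geq 0$ and $\delta \geq 1$ forces $\delta - 1 \geq 0$, while $m, n > 0$ and $n + \Delta > 0$, we obtain
\[
\frac{2mn^2(\Delta - 1) + \delta - 1}{n + \Delta} \geq 0 ,
\]
so that $M_1(T) \geq M_1(T) - \frac{2mn^2(\Delta - 1)+\delta - 1}{n + \Delta}$. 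Chaining this with $\sigma(T) \geq M_1(T)$ yields the stated bound.

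To match the case structure used for the Albertson version, I would then confirm the inequality on the two extremal trees explicitly. For the star $S_n$ one has $M_1(S_n) = n(n-1)$ and $\sigma(S_n) = (n-1)(n-2)^2$, with $\Delta = n-1$, $\delta = 1$, $m = n-1$; for the path $P_n$ one has $M_1(P_n) = 4n-6$ and $\sigma(P_n) = 2$, with $\Delta = 2$, $\delta = 1$, $m = n-1$. In each case substituting these values shows the right-hand side is at most $M_1$, and in fact the subtracted term grows fast enough that the right-hand side is frequently negative, so the gap is comfortable and the bound is loose rather than tight.

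The principal difficulty is structural rather than computational: one must verify that the correction term $n\Delta^2(\Delta-1)+m$ appearing in Proposition~\ref{prozagnsigman1} is genuinely nonnegative over the whole admissible parameter range, so that the reduction to $\sigma(T) \geq M_1(T)$ is legitimate. This is the only place where the degenerate regime $\Delta = 1$ (which forces $T \cong K_2$, outside the standing assumption $n \geq 3$) could interfere with the sign bookkeeping; once that boundary case is excluded and the nonnegativity of both the correction term and the subtracted fraction is secured, the conclusion follows immediately by transitivity.
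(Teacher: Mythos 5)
Your central reduction fails. You collapse the theorem to the comparison $\sigma(T)\geq M_1(T)$, but that comparison is false for paths: $\sigma(P_n)=2$ while $M_1(P_n)=4n-6$, so for every $n\geq 3$ one has $\sigma(P_n)<M_1(P_n)$. The root cause is that Proposition~\ref{prozagnsigman1}, which you invoke as your key lemma, is itself false as stated (the paper asserts it without proof): for $P_n$ it would demand $2\geq (4n-6)+4n+(n-1)$. Your own verification step exposes the contradiction --- you correctly record $\sigma(P_n)=2$ and $M_1(P_n)=4n-6$, and then observe only that the right-hand side is ``at most $M_1$,'' which is useless when $\sigma<M_1$, and that it is ``frequently negative,'' which is not a proof. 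The inequality does not hold because the subtracted fraction is nonnegative; it holds because the fraction is \emph{large} --- larger than $M_1(T)-\sigma(T)$ --- so the whole weight of the argument must fall on lower-bounding the fraction, precisely the step you treat as a throwaway sign check. (Your worry about $\Delta=1$ is well placed, incidentally: for $K_2$ the right-hand side equals $M_1(K_2)=2>0=\sigma(K_2)$, so $n\geq 3$ is genuinely needed; and your value $\sigma(S_n)=(n-1)(n-2)^2$ is correct where the paper's proof uses the wrong $(n-1)(n-2)$ --- but neither point repairs the main chain.)

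The fix is short and avoids the false proposition entirely. For any tree, $M_1(T)=\sum_v d_v^2\leq \Delta\sum_v d_v=2m\Delta$, and since $\Delta\leq n-1$ gives $n+\Delta<2n$, the subtracted term satisfies
\begin{equation*}
\frac{2mn^2(\Delta-1)+\delta-1}{n+\Delta}\;\geq\;\frac{2mn^2(\Delta-1)}{2n}\;=\;mn(\Delta-1).
\end{equation*}
For $n\geq 4$ and $\Delta\geq 2$ one has $n(\Delta-1)\geq 2\Delta$, hence the fraction is at least $2m\Delta\geq M_1(T)$, so the right-hand side of the theorem is $\leq 0\leq\sigma(T)$; the single remaining tree with $n=3$ is $P_3$, where the right-hand side is $6-36/5<0$. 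This also clarifies the comparison with the paper: the paper's proof likewise leans on the unproven Proposition~\ref{prozagnsigman1} and on star/path case analysis (with the erroneous $\sigma(S_n)$ value), but at least implicitly argues via the negativity of the right-hand side; your proposal replaces that with a transitivity-through-$M_1$ argument that no correct lemma can support, because the intermediate inequality $\sigma(T)\geq M_1(T)$ is simply not true on the class of trees the theorem covers.
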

\begin{proof}
Let $T$ be a regular tree, $S_n$ a star tree of order $n\geq 3$, $P_n$ path of length $n$ at least degree 1. In fact, from Theorem~\ref{thmalbn1}, introduced the bound 
\begin{equation}~\label{tessign1}
\irr(T) \geq M_1(T)  - \frac{2mn^2(\Delta - 1)+\delta-1}{n+\Delta}.
\end{equation}
This inequality provides a lower bound on the Albertson index of a tree $T$ in terms of the first Zagreb index $M_1(T)$, where $M_1(T)$ relate different structural measures and to estimate irregularity. 
\case{a} Let $S_n$ be a star tree of order $n\geq 3$. Since $\sigma(T)=\sum_{uv\in E}(d_u-d_v)^2$, and $\irr(T)=\sum_{uv\in E}\lvert d_u-d_v\rvert$, then when $T$ is a star tree, we have the bound satisfying
\begin{equation}~\label{sigmaboundn1}
    \sigma(S_n) \geq \Delta^3 + \Delta^2 - \frac{2 \Delta (\Delta + 1)^2 (\Delta - 1)}{2\Delta^2 + 1}.
\end{equation}
\case{1} If $\Delta=n-1$, Theorem~\ref{mainalb4} established that for a star tree $\irr(S_n)=(n-1)(n-2)$, then we find $M_1(S_n)=n(n-1)$, and when $n\geq 3$ we have $\sigma(S_n)=(n-1)(n-2)$, then the lower bound is
\begin{equation}~\label{eqqsigmasatn1}
\sigma(S_n) \geq (n-1)^3+2n^2.
\end{equation}
Hence, we have
\begin{align*}
 \sigma(S_n)-M_1(S_n) & \geq (n-1)^3+2n^2-n(n-1)\\
 &=n^3 - 2 n^2 + 4 n - 1>0.
\end{align*}
Thus, bound~(\ref{sigmaboundn1}) observe that when $\Delta(\Delta-1)\geq 0$, we have $\sigma(S_n) \geq 2(n-1)n^2(\Delta - 1)$ is the term of inequality~(\ref{sigmaboundn2}), then $\sigma(S_n) \geq 2(n-1)n^2(\Delta - 1)+\delta-1$, when $n+\Delta>0$ the inequality~(\ref{sigmaboundn2} valid. When $\sigma(S_n)-M_1(S_n)>0$ clearly. 
\begin{equation}~\label{sigmaboundn2}
     \sigma(S_n) \geq M_1(S_n)  - \frac{2mn^2(\Delta - 1)+\delta-1}{n+\Delta}.
\end{equation}
\case{2} In this case, if $\Delta=n+1$, then from~(\ref{eqqsigmasatn1}) the upper bound is 
\begin{equation}~\label{eqqsigmasatn2}
\sigma(S_n) \geq (n+1)^3+2n^2.
\end{equation}
Thus, $\sigma(S_n)-M_1(S_n)>0$, then~(\ref{sigmaboundn2}) is valid.
\case{b} If $T$ be a path denote by $P_n$, then according to~(\ref{tessign1}) we find that $ M_1(P_n) = 4n - 6$, according to Proposition~\ref{prozagn2} we have $\sigma(T)\geq M_1(T)+n\Delta^2(\Delta-1)+m$, then we provide an upper bound $\sigma(T)\geq M_1(T)+2mn^2\Delta^2(\Delta-1)$, so that we know $m=n-1$ for any tree, then $\sigma(T)\geq M_1(T)+2(n-1)n^2\Delta^2(\Delta-1)$ holds for $k>0$ the bound explicitly
\begin{equation}~\label{eqqsigmasatn3}
\sigma(T)\geq \Delta(\Delta-1)+k.\frac{2(n-1)n^2\Delta^2(\Delta-1)}{n+\Delta}.
\end{equation}
\case{1} If $k=\delta$, then from~(\ref{eqqsigmasatn3}) involves the Sigma index $\sigma(T)$ related to the path $P_n$, where Sigma index measures certain structural properties related to vertex degrees and adjacency, then the bound is 
\begin{equation}~\label{eqqsigmasatn4}
\sigma(P_n)\geq \Delta(\Delta-1)\left (1+\delta\frac{2(n-1)n^2\Delta}{n+\Delta}\right).
\end{equation}
Actually, from~(\ref{eqqsigmasatn4}), for $\Delta=n-1$, we define the lower bound
\begin{equation}~\label{sigmalowern1}
    (n - 1)(n - 2) \left( 1 + \delta \cdot \frac{2 n^{2} (n - 1)^{2}}{2 n - 1} \right).
\end{equation}
Then, for $\Delta=n+1$, we find that the upper bound 
\begin{equation}~\label{sigmauppern1}
    n \left( n + 1 + \delta \cdot \frac{2 (n - 1) n^{2} (n + 1)^{2}}{2 n + 1} \right).
\end{equation}
Both bounds~(\ref{sigmalowern1}),(\ref{sigmauppern1}) increase with the minimum degree $\delta$, reflecting that maximum degree $\Delta=n+1$ and minimum degree to increase the value of the respective tree $T$. This bounds~(\ref{sigmalowern1}),(\ref{sigmauppern1}) declare $\sigma(P_n)-M_1(P_n)>0$. 
\case{2} If $k=\Delta$, then from~(\ref{eqqsigmasatn3}) holds to 
\begin{equation}~\label{eqqsigmasatn5}
    \Delta (\Delta - 1) \left( 1 + \frac{2 (n - 1) n^{2} \Delta^{2}}{n + \Delta} \right).
\end{equation}
Hence, the lower bound when $\Delta=n-1$, both of term $2n-1$, $2n+1$ are close for large $n$, then $\sigma(T)= \Delta (\Delta - 1)$ satisfying by considering to~(\ref{eqqsigmasatn1}) as 
\begin{equation}~\label{eqqsigmalowern2}
   (n-1)(n-2)\left( 1 + \frac{2 n^{2} (n - 1)^{3}}{2 n - 1} \right).
\end{equation}
When $\Delta=n+1$, then we confirm an upper bound as
\begin{equation}~\label{eqqsigmauppern1}
   n (n + 1) \left( 1 + \frac{2 (n - 1) n^{2} (n + 1)^{2}}{2 n + 1} \right). 
\end{equation}
Actually, both of bounds~(\ref{eqqsigmalowern2}),(\ref{eqqsigmauppern1}) holds to $\sigma(P_n)-M_1(P_n)>0$. Then, 
\begin{equation}~\label{sigmaboundnm1}
     \sigma(P_n) \geq M_1(P_n)  - \frac{2mn^2(\Delta - 1)+\delta-1}{n+\Delta}.
\end{equation}
Finally, by both discussing the two cases~(\ref{sigmaboundn1}),(\ref{sigmaboundnm1}) when the tree is a star tree or a path and in both cases the tree is regular, we have shown all these bounds for the Sigma index and the first Zagreb index, where $\sigma(T)-M_1(T)>0$. As desire.
\end{proof}

\section{Conclusion}\label{sec5}
In this paper, we have established several significant relationships between the Albertson index and the first Zagreb index for trees. Our investigation has yielded a series of bounds and inequalities that provide deeper insights into the structural properties of trees as measured by these topological indices. The primary contribution of this work is the establishment of the relationship between $\irr(T)$ and $M_1(T)$ for a tree $T$, as expressed in Theorem \ref{mainalb3}:

\begin{equation}
\irr(T) = M_1(T) + \sum_{i=2}^{n-1} d_i + d_n - d_1 - 2n - 2
\end{equation}

\noindent where $d_1, d_2, \ldots, d_n$ represent the degree sequence of the tree with $d_n \geq \ldots \geq d_1$. This result provides an exact formula connecting these two important topological indices, allowing for more precise characterization of tree structures. Furthermore, we have established lower bounds for the Albertson index in terms of the first Zagreb index and the maximum degree $\Delta$. Proposition \ref{prozagn1} demonstrates that:

\begin{equation}
\irr(T) \geq M_1(T) + \Delta(\Delta - 1)
\end{equation}

This result complements our lower bounds and provides a more complete picture of the range within which the Albertson index can vary for trees with given maximum and minimum degrees. By linking the Albertson and Zagreb indices, this work offers tools for predicting molecular structures and designing networks with tailored irregularity. Future research could extend these results to broader graph classes, explore connections with other topological indices, and develop computational methods for large-scale networks. 

\end{document}